\documentclass[runningheads]{llncs}
\usepackage[T1]{fontenc}
\usepackage{graphicx}
\usepackage{tikz}
\usepackage{xcolor}
\usepackage[ruled,vlined, linesnumbered]{algorithm2e}
\usepackage{xspace}
\usepackage{amssymb}
\usepackage{amsmath}
\usepackage{placeins}
\usepackage{subcaption}

\definecolor{safeA1}{RGB}{26,133,255}  
\definecolor{safeA2}{RGB}{212,17,89}   

\newcommand{\Pgen}{\ensuremath{\mathcal{P}}\xspace}

\newcommand{\Pint}{\ensuremath{\mathcal{P}_{IP}}\xspace}

\newcommand{\Pex}{\ensuremath{\mathcal{P}_{ex}}\xspace}
\newcommand{\solP}{\ensuremath{Sol(\mathcal{P})}\xspace}
\newcommand{\solPint}{\ensuremath{Sol(\mathcal{P_{IP}})}\xspace}

\newcommand{\width}{\ensuremath{w}\xspace}
\newcommand{\genDD}{\ensuremath{\mathcal{M}}\xspace}
\newcommand{\resDD}{\ensuremath{\mathcal{M}^-}\xspace}
\newcommand{\relDD}{\ensuremath{\mathcal{M}^+}\xspace}

\begin{document}
\title{Implicit Decision Diagrams}
%
%
\author{Isaac Rudich\inst{1, 2}\orcidID{0000-0002-3106-1020} \and
Louis-Martin Rousseau\inst{1}\orcidID{0000-0001-6949-6014}}
\authorrunning{I. Rudich and L.M. Rousseau}
%
\institute{Polytechnique Montréal, Quebec, Canada \and
Carnegie Mellon University, Pittsburgh PA, USA}
\maketitle              
\begin{abstract}
Decision Diagrams (DDs) have emerged as a powerful tool for discrete optimization, with rapidly growing adoption.
DDs are directed acyclic layered graphs; restricted DDs are a generalized greedy heuristic for finding feasible solutions, and relaxed DDs compute combinatorial relaxed bounds.
There is substantial theory that leverages DD-based bounding, yet the complexity of constructing the DDs themselves has received little attention.
Standard restricted DD construction requires $\mathcal{O}(\width \log \width)$ per layer; standard relaxed DD construction requires $\mathcal{O}(\width^2)$, where $\width$ is the width of the DD.
Increasing $\width$ improves bound quality at the cost of more time and memory.

We introduce \emph{implicit Decision Diagrams}, storing arcs implicitly rather than explicitly, and reducing per-layer complexity to $\mathcal{O}(\width)$ for restricted and relaxed DDs.
We prove this is optimal: any framework treating state-update and merge operations as black boxes cannot do better.

Optimal complexity shifts the challenge from algorithmic overhead to low-level engineering.
We show how implicit DDs can drive a MIP solver, and release \texttt{ImplicitDDs.jl}, an open-source Julia solver exploiting the implementation refinements our theory enables.
Experiments demonstrate the solver outperforms Gurobi on Subset Sum.

\keywords{Decision Diagrams \and Combinatorial Optimization \and Mixed-Integer Programming.}
\end{abstract}

\section{Introduction}

The remarkable success of Mixed-Integer Programming (MIP) solvers stems not just from theoretical and algorithmic advances, but from highly optimized implementations \cite{koch2022progress}.
Solvers like Gurobi~\cite{gurobi}, SCIP~\cite{achterberg2009}, and HiGHS~\cite{huangfu2018parallelizing} succeed in part through low-level optimization, transforming theory into practice.

While MIP solvers have benefited from decades of theoretical and engineering refinement, Decision Diagrams (DDs) \cite{BnB,BnB2} have not.
DDs represent discrete optimization problems as acyclic directed layered graphs, where each root-to-terminal path encodes a potential solution.
Since 2016 \cite{DDforO}, DD-based techniques have seen rapid adoption, with a 2022 survey reviewing over 100 papers \cite{castro2022decision}.
DD-based solvers include ddo \cite{gillard2021ddo,gillard2022discrete,coppe2024decision,ddo}, CODD \cite{michel2024codd}, Peel-and-Bound \cite{Rudich2022PeelAndBoundGS,Rudich2023ImprovedPnB}, DIDP \cite{kuroiwa2023domain,beck_et_al:LIPIcs.CP.2025.5}, and HADDOCK \cite{gentzel2020haddock,gentzel2023optimization}, each competitive on specific problem classes.
In general, DDs are often effective for problems that can be formulated more compactly as Dynamic Programs than as MIPs \cite{MDDForSP}. 

Despite growing adoption, DD-based solvers remain in their infancy compared to MIP solvers.
There is substantial theory that leverages DD-based bounding techniques, yet the complexity of constructing the underlying DDs themselves has received little attention.
Addressing this matters: optimal algorithmic complexity shifts the challenge from algorithmic design to low-level engineering.
Restricted DDs provide primal bounds by encoding feasible solutions; standard construction requires $\mathcal{O}(\width \log \width)$ per layer (where $\width$ is the width of the DD).
Relaxed DDs provide dual bounds by over-approximating the feasible region; standard construction requires $\mathcal{O}(\width^2)$ per layer.
This dependency on $\width$ is critical, because in both cases bound quality improves with $\width$.

This paper makes five contributions to the theory of DDs.
(1) We develop a framework for analyzing DD construction complexity that separates the inherent costs from problem-specific costs.
Inherent costs are the costs incurred by DD construction, independent of the problem being solved. 
(2) We design \emph{implicit Decision Diagrams}, where arcs are stored implicitly, reducing per-layer complexity from $\mathcal{O}(\width \log \width)$ to $\mathcal{O}(\width)$ for restricted DDs and from $\mathcal{O}(\width^2)$ to $\mathcal{O}(\width)$ for relaxed DDs.
(3) We prove this construction is optimal: any general framework treating state-update and merge as black boxes cannot beat the $\mathcal{O}(\width)$ factor.
(4) For integer programming (IP), we show that problem-specific costs are subsumed by inherent costs.
(5) We show how implicit DDs can drive a MIP solver.

We also contribute to DD practice.
Having addressed the problem of optimal DD construction, we turn to low-level engineering.
We release \texttt{ImplicitDDs.jl}, a highly optimized open-source MIP solver in Julia for problems with bounded variables.
The solver accepts models via JuMP, Julia's standard optimization framework.
We validate our theoretical claims experimentally, and demonstrate the solver outperforms Gurobi on Subset Sum.

The paper proceeds as follows.
Section~\ref{sec:ip_notation} and ~\ref{sec:strategy} provide general framing.
Section~\ref{sec:standard_dds} analyzes standard DDs.
Section~\ref{sec:implicit_dds} presents implicit DDs.
Section~\ref{sec:mip} addresses MIP.
Section~\ref{sec:experiments} provides experiments, and Section~\ref{sec:conclusion} concludes.

\section{Integer Programming Notation}
\label{sec:ip_notation}

Our framework is general-purpose; it can be used for Constraint Programming as well as IP.
We use IP as our running example because its problem-specific costs are subsumed by inherent costs, providing a clean baseline where only inherent costs matter.
Section~\ref{sec:mip} extends the approach to MIP.

We assume all IPs have objectives normalized for minimization, and constraints normalized for $\leq$, matching our solver and framework.
Let $n$ be the number of integer variables and $m$ be the number of constraints.
Let $A \in \mathbb{R}^{m \times n}$ be the constraint coefficient matrix with entries $a_{ij}$. Let $c \in \mathbb{R}^n$ be the objective coefficients, and $b \in \mathbb{R}^m$ be the right-hand side (RHS) values.
Let $\mathit{lb}, \mathit{ub} \in \mathbb{Z}^n$ be the lower and upper bounds on the variables, respectively.
Our approach requires all variables to have finite bounds. Thus:
\begin{align*}
  \min \quad & c^\top x \\
  \text{s.t.} \quad & Ax \leq b \\
  & \mathit{lb} \leq x \leq \mathit{ub} \\
  & x \in \mathbb{Z}^n
\end{align*}

Let \Pint be an IP with $n$ variables $\{x_1, \ldots, x_n\}$; \solPint the set of feasible solutions; $x^*$ an optimal solution with value $z^*(\Pint)$; and $d(x_i)$ the domain of $x_i$.
Let $K = \max_{i \in \{1, \ldots, n\}} |d(x_i)|$ be the size of the largest domain.

\subsection{Example}
\label{subsec:ipex}

We use the following example \Pex throughout: 
\begin{align*}
\min\quad & -3x_1 - 2x_2 - 2x_3 - x_4 \\
\text{s.t.}\quad & x_1 + x_4 \le 1\\
& x_2 + x_3 \le 1\\
& x_i \in \{0,1\}, \quad i=1,\dots,4
\end{align*}

\subsection{Monotonic Constraint Residuals}
Our IP-specific approach uses \emph{constraint residuals} that are normalized to be monotonic, inspired by bound strengthening in MIP presolve \cite{achterberg2018}. 
Given a constraint $i \in \{1, \ldots, m\}$, let the \emph{monotonic constraint residual} $r_i$ be the remaining slack in $i$ when all unfixed variables minimize their contribution to the left hand side (LHS) of $i$.
For $k$ fixed variables $\{x_1, \ldots, x_k\}$, the residual for $i$ is:
\begin{equation}
\label{eq:residuals}
r_i = b_i - \sum_{j=1}^{k} a_{ij}x_j - \sum_{j=k+1}^{n} a_{ij} \cdot
\begin{cases}
\mathit{lb}_j & \text{if } a_{ij} > 0\\
\mathit{ub}_j & \text{if } a_{ij} < 0
\end{cases}
\end{equation}

The \emph{initial residuals} ($k=0$) are:
\begin{equation}
\label{eq:initial_residuals}
r_i^{(0)} = b_i - \sum_{j=1}^{n} a_{ij} \cdot
\begin{cases}
\mathit{lb}_j & \text{if } a_{ij} > 0\\
\mathit{ub}_j & \text{if } a_{ij} < 0
\end{cases}
\end{equation}

Assigning $x_k = v$ updates residuals recursively:
\begin{equation}
\label{eq:residual_update}
r_i^{(k)} = r_i^{(k-1)} + a_{ik} \cdot
\begin{cases}
\mathit{lb}_k - v & \text{if } a_{ik} > 0\\
\mathit{ub}_k - v & \text{if } a_{ik} < 0
\end{cases}
\end{equation}

Residuals can only decrease (or stay constant) as variables are fixed. A negative residual ($r_i < 0$) indicates constraint $i$ is violated. Henceforth, we refer to monotonic constraint residuals simply as residuals. 
The initial residuals for \Pex are $\mathbf{r}^{(0)} = [1, 1]$, since all variables contribute 0 at their minima. Given the partial solution $x_1 = 1$, the residuals become $\mathbf{r} = [0, 1]$.

\section{Complexity Analysis Strategy}
\label{sec:strategy}
Our general framework for constructing DDs includes problem-specific functions. 
To separate inherent costs from problem-specific costs, we treat these functions as black boxes.
However, there is a minimum cost to using these functions that is inherent to DD construction.
We explicitly identify those as inherent costs.
For example, $\textsc{UpdateState}$ must read and write a state, so calling it incurs an inherent cost proportional to the state size.

\section{Standard Decision Diagrams}
\label{sec:standard_dds}

This section reviews \cite{DDforO}; the complexity analysis is original.

\subsection{Exact Decision Diagrams}
\label{sec:exactdds}
A DD for a combinatorial optimization problem \Pgen is a directed layered acyclic graph with root $r$ and terminal $t$. 
An \emph{exact DD} represents \Pgen if every path from $r$ to $t$ encodes a feasible solution to \Pgen, and every feasible solution to \Pgen is encoded as a path from $r$ to $t$. 
A DD is exact for \Pgen iff the encoded solutions equal \solP.

We now construct an exact DD for \Pex using an approach similar to a decision tree.
Initialize root $r$ on layer 0 with initial residuals $\mathbf{r} = [1, 1]$ and objective $0$.
For each layer $i = 0, \ldots, n-1$: 
(1) for each node in the layer, create a child node for each value in $d(x_{i+1})$,
(2) update objective values and residuals,
(3) merge nodes with identical residuals, keeping only the minimal objective value, and
(4) remove infeasible nodes (negative residuals).
Merging preserves optimality because identical residuals represent equivalent constraint states.
Finally, feasible nodes on layer $n$ merge into terminal $t$ (state is now irrelevant). 
The shortest $r$-$t$ path is optimal (Figure~\ref{fig:exact_dd}).
This is exponential, and thus generally intractable.

\begin{figure}[h]
\centering
\includegraphics[width = \textwidth]{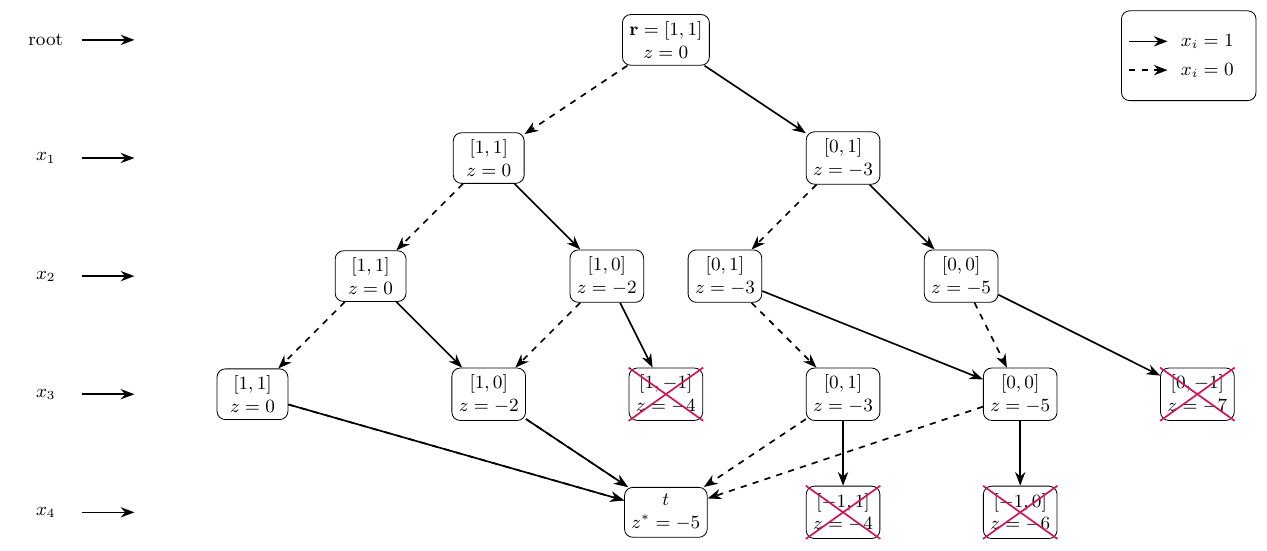}
\caption{Exact decision diagram for \Pex. \textcolor{safeA2}{X} indicates a constraint violation.}
\label{fig:exact_dd}
\end{figure}

\subsection{Restricted Decision Diagrams}
\label{sec:restricted_dd}

A \emph{restricted DD} \resDD for \Pgen is a generalized greedy heuristic for finding feasible solutions.
Unlike exact DDs, restricted DDs encode only a subset of feasible solutions \cite{DDforO}.
Let $Sol(\genDD)$ be the solutions encoded by \genDD. For a restricted DD: $Sol(\resDD) \subseteq \solP$.
The optimal path through \resDD (minimum-cost for minimization) is a feasible solution, providing an upper bound for \Pgen.

Restricted DD construction mirrors exact construction, except each layer is limited to width \width by heuristically removing nodes.
Let $\ell(u)$ be the layer of node $u$, $d(u)$ be the domain of the next variable $x_{\ell(u) + 1}$, $s(u)$ be the state vector of $u$ (which includes the objective value at $u$, $f(u)$, as well as other problem-specific information), and $|s(u)|$ be the length of $s(u)$.
Algorithm \ref{algo:topdown_restricted} formalizes standard restricted DD construction, and Figure \ref{fig:restricted_dd} shows a restricted DD for \Pex. 

\begin{algorithm}[h!t]
\SetAlgoLined
    \textbf{Input:} The root node $r$ for a given \Pgen. Problem-specific functions:  $\textsc{UpdateState}$ and $\textsc{Mergeable}$. \\
    Initialize $Q \leftarrow \{r\}$, $Q_{next} \leftarrow \emptyset$\\
   
    \While{$Q \neq \emptyset$}{
        $Q_{next} \leftarrow \emptyset$\\
        \tcp{1. Child generation}
        \ForEach{node $u \in Q$}{
            \ForEach{label $l \in d(u)$}{
                Create a new node $v$: $Q_{next} \leftarrow Q_{next} \cup \{v\}$\\
                Add arc $a_{uv}$, labeled $l$, from $u$ to $v$; $s(v) = \textup{\textsc{UpdateState}}(s(u), x_{\ell(u)+1}, l)$\\
            }
        }
        \tcp{2. Merging}
        \textbf{(Optional)} For each pair of nodes where \textup{\textsc{Mergeable}}($s(u)$, $s(v)$) $=$ true, keep only the node with minimum $f(\cdot)$\\
        \tcp{3. Feasibility pruning}
        \ForEach{node $u \in Q_{next}$}{
            \If{\textup{\textsc{IsFeasible}}($s(u)$) $=$ false}{
                $Q_{next} \leftarrow Q_{next} \backslash \{u\}$\\
            }
        }
        \tcp{4. Width limiting}
        \If{$|Q_{next}| > \width$}{
            Sort $ u \in Q_{next}$ by increasing objective value ($f(u)$)\\
            Trim $Q_{next}$ to length \width by removing the last elements\\
        }
        \tcp{5. Additional operations}
        \textbf{(Optional)} Apply additional operations (e.g., rough bounding \cite{ddo})\\

        $Q \leftarrow Q_{next}$\\
    }
    
    \Return{\resDD}
    \caption{Standard Restricted Decision Diagrams \cite{DDforO}}
    \label{algo:topdown_restricted}
\end{algorithm}

\begin{figure}[h!t]
\centering
\includegraphics[width = \textwidth]{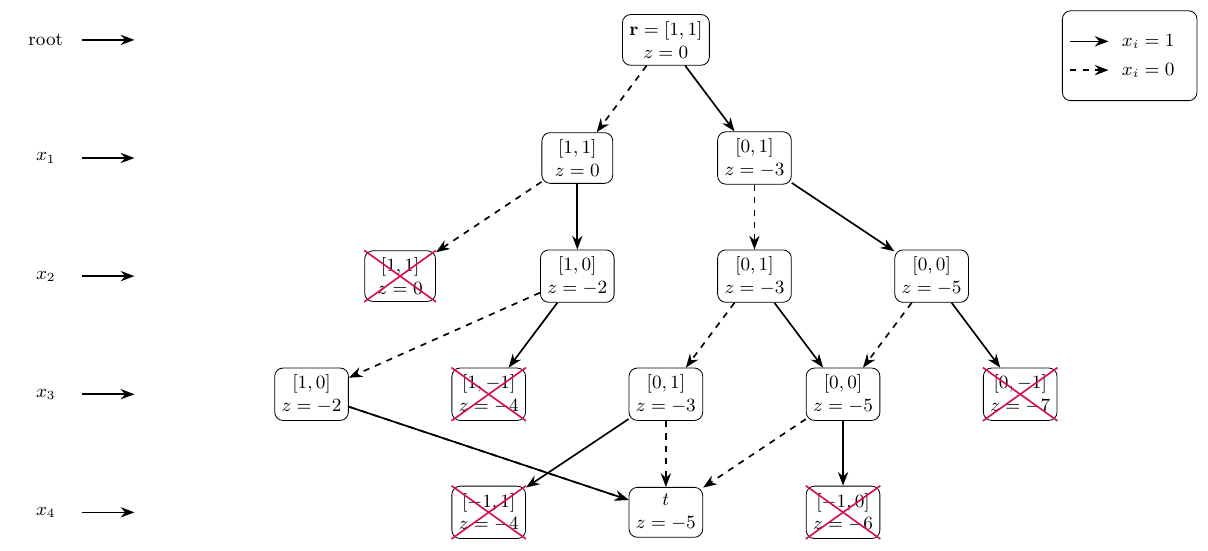}
\caption{Restricted DD for \Pex, $\width = 3$. \textcolor{safeA2}{X} marks infeasible or trimmed nodes.}
\label{fig:restricted_dd}
\end{figure}

\vspace{-3em}

\subsubsection{Generalized Complexity Analysis for Restricted DDs}
We analyze Algorithm \ref{algo:topdown_restricted}, which performs five operations per layer: (1) child generation, (2) merging, (3) feasibility pruning, (4) width limiting, and (5) other. 
Recall that $K$ is the size of the largest domain of any variable, and $|s|$ is the length of the state vector being stored at each node. The \textcolor{safeA1}{highlighted} term is the inerent cost.

\textbf{Child generation:} Creates up to $K \cdot \width$ new nodes by calling $\textsc{UpdateState}$ for each node-label pair, requiring $\mathcal{O}(|s|) + C_{updt}$ per call, where $C_{updt}$ is the problem-specific cost of $\textsc{UpdateState}$.
Per layer: \[
    \textcolor{safeA1}{\mathcal{O}(K \cdot \width \cdot |s|)} + \mathcal{O}(K \cdot \width) \cdot C_{updt}
\]

\textbf{Merging without hashing:} Merges equivalent nodes by calling $\textsc{Mergeable}$ on pairs, requiring $\mathcal{O}(|s|) + C_{mrg}$ per comparison, where $C_{mrg}$ is the problem-specific cost of $\textsc{Mergeable}$. 
Finding all equivalent pairs is done by sorting and requires $\mathcal{O}(K \cdot \width \cdot \log(K \cdot \width))$ node comparisons.
Per layer:
\[
 \textcolor{safeA1}{\mathcal{O}(K \cdot \width \cdot |s| \cdot \log(K \cdot \width))} +  \mathcal{O}(K \cdot \width \cdot \log(K \cdot \width)) \cdot C_{mrg}
\]

\textbf{Merging with hashing:} 
For many problems, nodes can be grouped by hashing instead of sorting, requiring $\mathcal{O}(|s|)$ operations per node \cite{kuroiwa2023domain}. Matching hashes define equivalence. This eliminates the problem-specific cost of calling  $\textsc{Mergeable}$.
Per layer: \[
\textcolor{safeA1}{\mathcal{O}(K \cdot \width \cdot |s|)}
\]

\textbf{Feasibility pruning:} 
Calls $\textsc{IsFeasible}$ on each node, requiring $\mathcal{O}(|s|) + C_{isf}$ per node, where $C_{isf}$ is the problem-specific cost of $\textsc{IsFeasible}$.
Per layer: \[
\textcolor{safeA1}{\mathcal{O}(K \cdot \width \cdot |s|)} + \mathcal{O}(K \cdot \width) \cdot C_{isf}
\]

\textbf{Width limiting:} Sorts nodes by objective value and trims to $\width$.
Per layer: \[
\textcolor{safeA1}{\mathcal{O}(K \cdot \width \cdot \log(K \cdot \width))}
\]

\textbf{Additional operations:} Optional operations with cost $C_{othr}$ (excluded).
\vspace{1em}\\
The inherent framework cost is the \textcolor{safeA1}{component} without any problem-specific costs ($C_{updt}$, $C_{mrg}$, $C_{isf}$, $C_{othr}$). 
For $n$ layers:

\textbf{Merging without hashing:}
\[
\textcolor{safeA1}{\mathcal{O}\Big(n \cdot K \cdot \width \cdot |s| \cdot \log(K \cdot \width)\Big)} + \mathcal{O}( n \cdot K \cdot \width) \cdot \Big(C_{updt} +  C_{isf} + \mathcal{O}\big(\log(K \cdot \width)\big) \cdot C_{mrg}\Big)
\]

\textbf{Merging with hashing (or without merging):}
\begin{gather*}
\textcolor{safeA1}{\mathcal{O}(n \cdot K \cdot \width \cdot |s|) + \mathcal{O}(n \cdot K \cdot \width \cdot \log(K \cdot \width))} + \mathcal{O}( n \cdot K \cdot \width) \cdot (C_{updt} +  C_{isf})\\
\textcolor{safeA1}{\mathcal{O}\Big(n \cdot K \cdot \width \cdot \max\big(|s|, \log(K \cdot \width)\big)\Big)} + \mathcal{O}( n \cdot K \cdot \width) \cdot (C_{updt} +  C_{isf})
\end{gather*}

The goal is to maximize \width while maintaining tractability.
When $\log(K \cdot \width) > |s|$, both hashing and no merging achieve $\mathcal{O}(n \cdot K \cdot \width \cdot \log(K \cdot \width))$.
The complexity is dominated by width limiting; scaling to larger \width requires a different approach.

\subsubsection{Application to Integer Programs}
For \Pint, $s(u) = \mathbf{r}(u)$, where $\mathbf{r}(u)$ is the residual vector at $u$, so $|s| = m$.
\textsc{UpdateState} computes residuals using Equation \ref{eq:residual_update}, $\mathcal{O}(m)$.
\textsc{IsFeasible} checks if all residuals are non-negative $\mathcal{O}(m)$.
Residuals can use hash-based merging (no problem-specific cost).
The total problem-specific costs are $\mathcal{O}(m)$.
When $\log(K \cdot \width) > m$, problem-specific costs are dominated by inherent costs; otherwise they are subsumed.

\subsection{Relaxed Decision Diagrams}
\label{sec:relaxed_dd}

A \emph{relaxed DD} \relDD provides a lower (relaxed) bound by encoding all feasible solutions plus infeasible ones: $\solP \subseteq Sol(\relDD)$ \cite{DDforO}.
Two standard approaches are \textit{top-down merging} and \textit{construction by separation}, both in $\mathcal{O}(\width^2)$ time.
Section \ref{subsec:implicit_rel_dds} uses separation, so we present it here; top-down is in Appendix \ref{appendix:topdown_relaxed}.

Construction by separation starts with a $\width$=1 DD and splits nodes.
Three new operations: \textsc{MergeStates}$(s(u), s(v))$, \textsc{SelectNodeToSplit}$(L)$, and \textsc{PartitionArcs}$(u)$.
For arc $a$, let $p(a)$ and $l(a)$ be its parent and label.
Algorithm \ref{algo:separation_relaxed} formalizes this; Figure \ref{fig:relaxed_dd} demonstrates it for $\Pex$.

\begin{figure}[h!t]
\centering
\includegraphics[width = 0.8\textwidth]{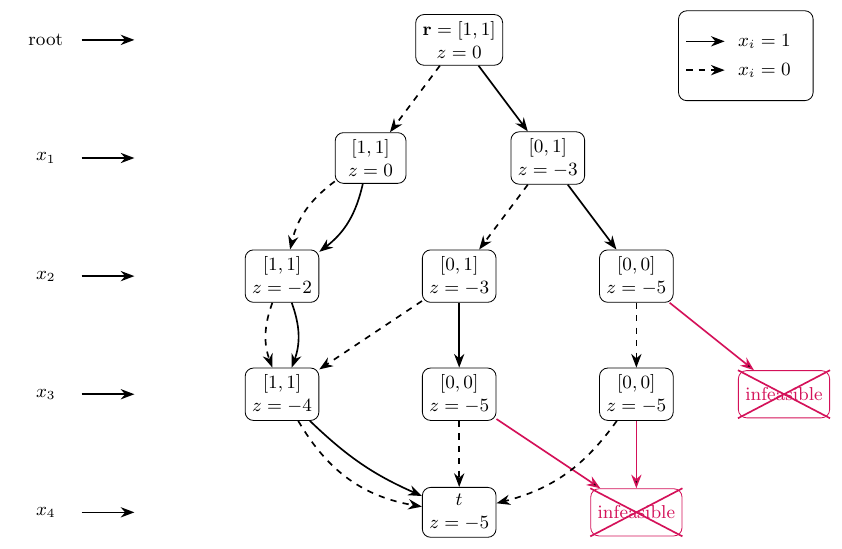}
\caption{Relaxed DD for \Pex, $\width = 3$. Terminal $z$ is a lower bound.}
\label{fig:relaxed_dd}
\end{figure}

\subsubsection{Complexity Analysis}

Algorithm \ref{algo:separation_relaxed} has two phases.

\textbf{Phase 1: Initial $\width$=1 DD:}
For each layer, creates single node $v$ with $\mathcal{O}(K)$ in-arcs (one per parent label) and computes merged state. Per layer:
\[
\textcolor{safeA1}{\mathcal{O}(K \cdot |s|)} + \mathcal{O}(K) \cdot (C_{updt} + C_{mrg})
\]

\textbf{Phase 2: Feasibility Pruning and Node Splitting:}

Arc feasibility pruning checks up to $K \cdot \width$ arcs, calling \textsc{UpdateState} and \textsc{IsFeasible} for each $\big(\textcolor{safeA1}{\mathcal{O}(K \cdot \width \cdot |s|)} + \mathcal{O}(K \cdot \width) \cdot (C_{updt} + C_{isf})\big)$.

Node splitting performs up to $\width - 1$ splits $\big(\mathcal{O}(\width)\big)$.
Each split:
(1) Calls $\textsc{SelectNodeToSplit}$ ($C_{selspl}$).
(2) Calls $\textsc{PartitionArcs}$ ($C_{part}$) to partition in-arcs into sets $I_u$ and $I_{u'}$,
$\big(\mathcal{O}(|I_u| + |I_{u'}|) + C_{part}\big)$.
In the worst case the node with the most in-arcs is selected, and one arc is separated. So the first split processes $\mathcal{O}(K \cdot \width)$ arcs, the next $\mathcal{O}(K \cdot \width) - 1 = \mathcal{O}(K \cdot \width)$ arcs, and the last $\mathcal{O}(K \cdot \width) - (\width - 2) = \mathcal{O}(K \cdot \width)$ arcs. Thus, in the worst case $\mathcal{O}(|I_u| + |I_{u'}|) = \mathcal{O}(K \cdot \width)$.
(3) Redirects in-arcs: Arcs in $I_{u'}$ redirected from $u$ to $u'$ (subsumed by step (2)).
(4) Copies out-arcs: Node $u$ has up to $K$ out-arcs $\big(\mathcal{O}(K)\big)$.
(5) Recomputes states: Computes merged states from the partitioned in-arc sets.
The worst case from step (2) (splitting one arc per iteration), applies here.
Thus this step requires $\mathcal{O}(K \cdot \width \cdot |s|) + \mathcal{O}(K \cdot \width) \cdot (C_{updt} + C_{mrg})$.
The overall phase 2 per layer cost: \[
\textcolor{safeA1}{\mathcal{O}(K \cdot \width^2 \cdot |s|)} + \mathcal{O}(\width) \cdot (C_{selspl} + C_{part}) + \mathcal{O}(K \cdot \width) \cdot C_{isf} + \mathcal{O}(K \cdot \width^2) \cdot (C_{updt} + C_{mrg})
\]

Phase 2 dominates phase 1. The overall complexity for $n$ layers:
\[
n \cdot \Big( \textcolor{safeA1}{\mathcal{O}(K \cdot \width^2 \cdot |s|)} + \mathcal{O}(\width) \cdot (C_{selspl} + C_{part}) + \mathcal{O}(K \cdot \width) \cdot C_{isf} + \mathcal{O}(K \cdot \width^2) \cdot (C_{updt} + C_{mrg}) \Big)
\]

The $\mathcal{O}(\width^2)$ factor arises from performing $\mathcal{O}(\width)$ splits, with each split recomputing a merged state from $\mathcal{O}(K \cdot \width)$ in-arcs.
This quadratic growth is evident in practice.

\begin{algorithm}[h!b]
\SetAlgoLined
    \textbf{Input:} Root node $r$ for a given \Pgen. Problem-specific functions: $\textsc{UpdateState}$, $\textsc{MergeStates}$, $\textsc{IsFeasible}$, $\textsc{SelectNodeToSplit}$, and $\textsc{PartitionArcs}$.\\
    \tcp{Phase 1: Construct initial $\width = 1$ relaxed DD}
    Initialize $L_0 \leftarrow \{r\}$\\
    \For{$i = 1$ to $n$}{
        Let $u$ be the single node in $L_{i-1}$\\
        Create single node $v$ in layer $L_i$\\
        Add arcs from $u$ to $v$ labeled $l$ for each $l \in d(u)$\\    
        Let $l_0$ be the first label in $d(u)$\\
        Initialize $s(v) \leftarrow$ \textsc{UpdateState}$(s(u), x_i, l_0)$\\
        \ForEach{label $l \in d(u) \backslash \{l_0\}$}{
            $s(v) \leftarrow$ \textsc{MergeStates}$(s(v), \textsc{UpdateState}(s(u), x_i, l))$\\
        }
    }

    \tcp{Phase 2: Refine via node splitting}
    \For{$i = 1$ to $n$}{
        \tcp{Feasibility pruning}
        \ForEach{arc $a$ into $L_i$}{
            $s_a \leftarrow$ \textsc{UpdateState}$(s(p(a)), x_i, l(a))$\\
            \lIf{not \textsc{IsFeasible}$(s_a)$}{remove $a$}
        }
        \tcp{Node splitting}
        \While{$|L_i| < \width$}{
            $u \leftarrow$ \textsc{SelectNodeToSplit}$(L_i)$ (break loop if no beneficial split)\\
            $(I_u, I_{u'}) \leftarrow$ \textsc{PartitionArcs}$(u)$ (partition in-arcs into two sets)\\
            Create new node $u'$: $L_i \leftarrow L_i \cup \{u'\}$\\
            Redirect arcs in $I_{u'}$ from $u$ to $u'$\\
            Copy all out-arcs of $u$ to $u'$\\
            \ForEach{$(\beta, I) \in \{(u', I_{u'}), (u, I_u)\}$}{
                Let $a_0$ be the first arc in $I$\\
                $s(\beta) \leftarrow$ \textsc{UpdateState}$(s(p(a_0)), x_i, l(a_0))$\\
                \ForEach{remaining arc $a \in I$}{
                    $s(\beta) \leftarrow$ \textsc{MergeStates}$(s(\beta), \textsc{UpdateState}(s(p(a)), x_i, l(a)))$\\
                }
            }
        }
    }

    \Return{\relDD}
    \caption{Relaxed DD Construction by Separation \cite{DDforO}}
    \label{algo:separation_relaxed}
\end{algorithm}

\subsubsection{Application to Integer Programs}
For \Pint, $|s| = m$.
$C_{updt} = \mathcal{O}(m)$ (computing residuals via Equation \ref{eq:residual_update});
$C_{mrg} = \mathcal{O}(m)$ (element-wise maximum of residuals);
$C_{isf} = \mathcal{O}(m)$ (checking residuals);
$C_{selspl} = \mathcal{O}(\width)$ amortized (sorted nodes: $\mathcal{O}(\width \log \width)$ sort, $\mathcal{O}(\width)$ inserts);
$C_{part} = \mathcal{O}(K \cdot \width)$ (arc partitioning).
So the overall complexity becomes:
\[
n \cdot \Big( \textcolor{safeA1}{\mathcal{O}(K \cdot \width^2 \cdot m)} + \mathcal{O}(K \cdot \width^2) + \mathcal{O}( \width \log \width) + \mathcal{O}(K \cdot \width  \cdot m) + \mathcal{O}(K \cdot \width^2 \cdot m) \Big)
\]
All problem-specific terms are subsumed by inherent costs.

\section{Implicit Decision Diagrams}
\label{sec:implicit_dds}
We present alternative frameworks for constructing restricted and relaxed DDs, improving on the inherent costs from Sections \ref{sec:restricted_dd} and \ref{sec:relaxed_dd}.
Prior work on a DD processing algorithm called Peel and Bound \cite{Rudich2024Thesis}, introduced implicit DDs that store arc labels implicitly; we take this idea to its conclusion by storing the arcs themselves implicitly.
This achieves substantially wider DDs than prior work. 

This section parallels Section~\ref{sec:standard_dds}: we describe and formalize each algorithm, then analyze its complexity.
Since implicit DDs are the paper's main contribution, the exposition is more thorough, and it includes optimality proofs. 

\subsection{Implicit Restricted Decision Diagrams}

Algorithm \ref{algo:topdown_restricted} generates $\mathcal{O}(K \cdot \width)$ children before width limiting, explicitly storing nodes that may be discarded.
We avoid creating such nodes by discriminating based on an objective threshold $\tau$.

\subsubsection{Histogram Construction}
\label{subsubsec:histogram_construction}

Algorithm \ref{algo:histogram} constructs a histogram over arc objective values; it is used for both restricted and relaxed implicit DDs.
Let $f(u,l)$ be the objective value of the arc from $u$ with label $l$, $(u,l)$.
The algorithm finds $(\tau_{min}, \tau_{max})$, the min and max arc values, partitions $[\tau_{min}, \tau_{max}]$ into $\mathcal{O}(\width)$ bins, and assigns each arc to a bin.
The cumulative histogram $C[j,l]$ gives the number of arcs with label $l$ in bins $\leq j$.
Algorithm \ref{algo:histogram} runs in $\mathcal{O}(K \cdot \width)$ time.

\begin{algorithm}[h!t]
\SetAlgoLined
    \textbf{Input:} Parent layer $Q$, width limit $\width$, domain size $K$ \\
    \textbf{Output:} Bin boundaries $(\tau_{min}, \tau_{max})$, cumulative histogram $C$ \\
    \tcp{Pass 1: Compute objective bounds}
    $(\tau_{min}, \tau_{max}) \leftarrow (\min, \max)$ of $f(u,l)$ over all combinations of $u \in Q$, $l 
  \in d(u)$\\
    \tcp{Pass 2: Build histogram}
    $H \leftarrow \width \times K$ matrix of zeros\\
    \ForEach{node $u \in Q$}{
        \ForEach{label $l \in d(u)$}{
            $j \leftarrow \min\Big(\width, \; 1 + \lfloor \frac{f(u,l) - \tau_{min}}{\tau_{max} - \tau_{min}} \cdot \width \rfloor\Big)$ (with $\frac{0}{0} := 0$)\\
            $H[j,l] \leftarrow H[j,l] + 1$\\
        }
    }
    \ForEach{label $l \in \{1, \ldots, K\}$}{
        $C[0,l] \leftarrow 0$\\
        $C[1,l] \leftarrow H[1,l]$\\
        \For{$j = 2$ to $\width$}{
            $C[j,l] \leftarrow C[j-1,l] + H[j,l]$\\
        }
    }
    \Return{$(\tau_{min}, \tau_{max}, C)$}
    \caption{Histogram Construction for Arc Objective Values}
    \label{algo:histogram}
\end{algorithm}

\subsubsection{Two-Zone Selective Child Generation}

Using Algorithm \ref{algo:histogram}, we select threshold $\tau$ as the largest bin where $\sum_l C[\tau, l] \leq \width$.
This approximates the $\width$-th smallest arc objective value in $\mathcal{O}(K \cdot \width)$ time, avoiding $\mathcal{O}(K \cdot \width \cdot \log(K \cdot \width))$ sorting.

Algorithm \ref{algo:implicit_restricted} formalizes a \textit{two-zone} approach, processing both zones in a single pass:
(1) the \textit{baseline zone} includes all arcs in bins $\leq \tau$, containing at most $\width$ arcs;
(2) the \textit{extras zone} uses a budget to greedily select arcs from bin $\tau + 1$.
This eliminates merging and width limiting from Algorithm \ref{algo:topdown_restricted}.
When the number of feasible arcs exceeds $\width$, this ensures $|Q_{next}| = \width$; otherwise all arcs are kept.
The only tradeoff vs. standard width-limiting is that arcs from bin $\tau + 1$ are selected arbitrarily.
Figure~\ref{fig:implicit_restricted} demonstrates the procedure for \Pex.

\begin{algorithm}[h!t]
\SetAlgoLined
    \textbf{Input:} The root node $r$ for a given \Pgen, width limit $\width$, domain size $K$. Problem-specific functions:  $\textsc{UpdateState}$ and $\textsc{IsFeasible}$.\\
    Initialize $Q \leftarrow \{r\}$, $Q_{next} \leftarrow \emptyset$\\

    \While{$Q \neq \emptyset$}{
        $Q_{next} \leftarrow \emptyset$\\
        \tcp{1. Histogram construction and threshold selection}
        $(\tau_{min}, \tau_{max}, C) \leftarrow$ Algorithm \ref{algo:histogram}$(Q, \width, K)$\\
        $\tau \leftarrow \max\{j : \sum_l C[j,l] \leq \width\}$\\
        \tcp{2. Two-zone selective child generation}
        $\text{budget} \leftarrow \width - \sum_l C[\tau, l]$\\
        \ForEach{node $u \in Q$}{
            \ForEach{label $l \in d(u)$}{
                $j \leftarrow \min\Big(\width, \; 1 + \lfloor \frac{f(u,l) - \tau_{min}}{\tau_{max} - \tau_{min}} \cdot \width \rfloor\Big)$\\
                \If{$j \leq \tau$ \textbf{or} $(j = \tau + 1$ \textbf{and} $\text{budget} > 0)$}{
                    Create node $v$: $Q_{next} \leftarrow Q_{next} \cup \{v\}$\\
                    $s(v) \leftarrow \textup{\textsc{UpdateState}}(s(u), x_{\ell(u)+1}, l)$\\
                    \lIf{$j = \tau + 1$}{$\text{budget} \leftarrow \text{budget} - 1$}
                }
            }
        }
        \tcp{3. Feasibility pruning}
        \ForEach{node $v \in Q_{next}$}{
            \If{\textup{\textsc{IsFeasible}}($s(v)$) $=$ false}{
                $Q_{next} \leftarrow Q_{next} \backslash \{v\}$\\
            }
        }
        \textbf{(Optional)} Apply additional operations (e.g., rough bounding \cite{ddo})\\

        $Q \leftarrow Q_{next}$\\
    }

    \Return{\resDD}
    \caption{Implicit Restricted Decision Diagrams}
    \label{algo:implicit_restricted}
\end{algorithm}

\begin{figure}[h!t]
\centering
\includegraphics[width=0.75\textwidth]{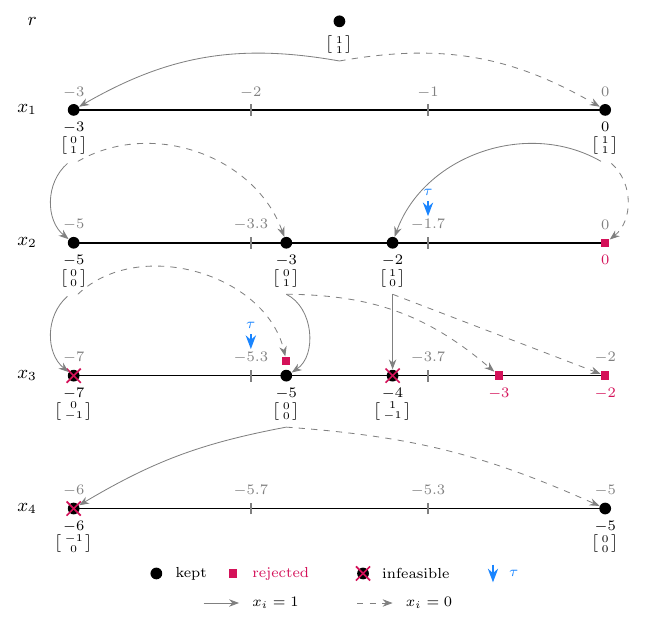}
\caption{Implicit restricted DD for \Pex with $\width=3$. Arcs below threshold $\tau$ are kept (dots); excess arcs are rejected (squares). 
The gray ticks on the number line indicate histogram bin edges.
On $x_2$, one of four candidates is rejected. On $x_3$, one node is rejected arbitrarily from the bin above $\tau$, and two of the kept nodes are infeasible (X). 
Though visually leftmost (lowest objective), the rejected node is rightmost in memory order (highest parent index), so the budget expires before it is processed.
Only one feasible node remains for $x_4$.
}
\label{fig:implicit_restricted}
\end{figure}

\subsubsection{Generalized Complexity Analysis for Implicit Restricted DDs}
We extract the inherent costs to compare with Algorithm \ref{algo:topdown_restricted}.
The algorithm performs three operations per layer: (1) histogram construction and threshold selection, (2) selective child generation, and (3) feasibility pruning.

\textbf{Histogram construction and threshold selection:}
Algorithm \ref{algo:histogram} runs in $\mathcal{O}(K \cdot \width)$ time.
In Algorithm \ref{algo:implicit_restricted}, $\tau$ is found by scanning the $\width$ cumulative totals $\big(\mathcal{O}(\width)\big)$.
Per layer: \[
    \textcolor{safeA1}{\mathcal{O}(K \cdot \width)}
\]

\textbf{Selective child generation:}
Evaluates all $\mathcal{O}(K \cdot \width)$ potential arcs in a single pass, creating at most $\width$ children.
For each selected arc, $\textsc{UpdateState}$ creates child $v$ $\big(\mathcal{O}(|s|) + C_{updt}\big)$.
Per layer: \[
    \textcolor{safeA1}{\mathcal{O}(K \cdot \width + \width \cdot |s|)} + \mathcal{O}(\width) \cdot C_{updt}
\]

\textbf{Feasibility pruning:}
Checks all $\mathcal{O}(\width)$ child nodes for feasibility.
For each child, $\textsc{IsFeasible}$ checks the state $\big(\mathcal{O}(|s|) + C_{isf}\big)$.
Per layer: \[
    \textcolor{safeA1}{\mathcal{O}(\width \cdot |s|)} + \mathcal{O}(\width) \cdot C_{isf}
\]

\textbf{Additional operations:} Optional operations with cost $C_{othr}$ (excluded).
\vspace{1em}\\
The inherent framework cost is the \textcolor{safeA1}{component} that excludes problem-specific costs ($C_{updt}$, $C_{isf}$, $C_{othr}$).
Comparing implicit DDs (Algorithm \ref{algo:implicit_restricted}) to standard DDs (Algorithm \ref{algo:topdown_restricted}) for $n$ layers:
\begin{align*}
\textbf{Standard:} \quad & \textcolor{safeA1}{\mathcal{O}\Big(n \cdot K \cdot \width \cdot \max\big(|s|, \log(K \cdot \width)\big)\Big)} + \mathcal{O}(n \cdot K \cdot \width) \cdot (C_{updt} + C_{isf}) \\
\textbf{Implicit:} \quad & \textcolor{safeA1}{\mathcal{O}\Big(n \cdot \width \cdot (K + |s|)\Big)} + \mathcal{O}(n \cdot \width) \cdot (C_{updt} + C_{isf})
\end{align*}
Factoring out $\mathcal{O}(n \cdot \width)$ and subtracting  $(C_{updt} + C_{isf})$:
\[
\textcolor{safeA1}{\mathcal{O}\Big(K \cdot \max\big(|s|, \log(K \cdot \width)\big)\Big)} + \mathcal{O}(K) \cdot (C_{updt} + C_{isf}) \gg \textcolor{safeA1}{\mathcal{O}\Big(K + |s|\Big)}
\]

When $\log(K \cdot \width) > |s|$, standard incurs $\mathcal{O}(K \cdot \log(K \cdot \width))$ from width limiting, while implicit achieves $\mathcal{O}(K + |s|)$, eliminating the log factor.
When $|s| \geq \log(K \cdot \width)$, the comparison becomes $\mathcal{O}(K \cdot |s|)$ versus $\mathcal{O}(K + |s|)$, reducing $|s|$ from a multiplicative factor to an additive factor.
Algorithm \ref{algo:implicit_restricted} scales linearly in $\width$; Algorithm \ref{algo:topdown_restricted} scales as $\mathcal{O}(\width \cdot \log(\width))$.

\subsubsection{Application to Integer Programs}
For \Pint, state and operations match Section \ref{sec:restricted_dd}, except \textsc{UpdateState} is called $\leq \width$ times per layer, reducing reads from $\mathcal{O}(K \cdot \width \cdot m)$ to $\mathcal{O}(\width \cdot m)$.
As $|s| = m$, the complexity is:
\[
\mathcal{O}\Big(n \cdot \width \cdot (K + m)\Big)
\]

\subsection{Implicit Relaxed Decision Diagrams}
\label{subsec:implicit_rel_dds}

Relaxed DD construction by separation (Section \ref{sec:relaxed_dd}) has $\mathcal{O}(K \cdot \width^2 \cdot |s|)$ inherent cost per layer because intermediate states are generated explicitly.
We present an alternative framework with $\mathcal{O}(K \cdot \width \cdot |s|)$ inherent cost per layer by using \textit{interval-based arc representation} and \textit{threshold-based node splitting}.
We first explain the algorithm in detail, then formalize it in Algorithm \ref{algo:implicit_relaxed}.

\subsubsection{Interval-Based Arc Storage}
Standard construction stores arcs explicitly; we store them as intervals.
We construct our initial diagram with up to $K$ nodes instead of $1$, such that all in-arcs to a node have the same label, and the nodes are sorted by their associated label.
First, feasible domain bounds: we define $\mathcal{D}[p] = [\mathit{lb}_p, \mathit{ub}_p]$ as the constraint-feasible label range for children of $p$, where $\mathit{lb}_p$ is the smallest feasible label, and $\mathit{ub}_p$ is the largest.
We define a subroutine \textsc{ComputeDomainInterval} to compute these via problem-specific operations (\textsc{UpdateState}, \textsc{IsFeasible}).
Let $\text{idx}(u)$ be the index of node $u$ in its layer, and $l(u)$ be the label of all the arcs that point to $u$.
Second, inverted intervals: each child node with label $l$ stores $[\psi(l), \phi(l)]$, the interval of parent indices where $l$ is feasible.
Let the \textit{inverted interval} for $l$ be:
\begin{align*}
\psi(l) &= \min \Big\{\text{idx}(p) : \;\; p \in L_{i-1}, \;\; l \in \mathcal{D}[p]\Big\} \\
\phi(l) &= \max \Big\{\text{idx}(p) : \;\; p \in L_{i-1}, \;\; l \in \mathcal{D}[p]\Big\}
\end{align*}
This interval may include infeasible parents (a valid relaxation), but no information is lost; individual arcs can be verified via \textsc{IsArcFeasible}, defined via \textsc{UpdateState} and \textsc{IsFeasible}.
In the final DD, each node $u$ with label $l$ stores in-arcs as $\psi(l), \phi(l)$ and out-arcs as $\mathit{lb}_u, \mathit{ub}_u$, avoiding the $\mathcal{O}(\width)$ in-arc pointers and $\mathcal{O}(K)$ out-arc pointers used by standard DDs.
\begin{theorem}[IP Exactness]\label{thm:ip_exactness}
For IPs, the interval representation is exact:
\[
\text{idx}(p) \in [\psi(l), \phi(l)] \quad \text{AND} \quad l \in \mathcal{D}[p] \iff \text{arc } (p,l) \text{ is feasible}
\]
\end{theorem}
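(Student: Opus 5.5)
The plan is to reduce both directions to one fact: for an IP, the set of feasible labels at a parent $p$ is an integer interval, and that interval is exactly $\mathcal{D}[p]$. First I will fix what "arc $(p,l)$ is feasible" means. It means that $\textsc{IsFeasible}(\textsc{UpdateState}(s(p), x_i, l))$ holds, i.e.\ all residuals obtained from Equation~\ref{eq:residual_update} with $v=l$ are non-negative. The key structural step is to show that, for fixed $p$, the set $F_p=\{l \in d(x_i) : (p,l) \text{ feasible}\}$ is a contiguous integer interval.

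For the convexity step, I look at each constraint $j$ separately. The updated residual is $r_j(p) + a_{ji}(\mathit{lb}_i - l)$ if $a_{ji}>0$, or $r_j(p)+a_{ji}(\mathit{ub}_i-l)$ if $a_{ji}<0$. In either case it is an affine function of $l$. Requiring it to be $\ge 0$ therefore gives a half-line in $l$: an upper bound when $a_{ji}>0$, a lower bound when $a_{ji}<0$, and no condition when $a_{ji}=0$. Intersecting these half-lines over $j=1,\dots,m$ and with $[\mathit{lb}_i,\mathit{ub}_i]\cap\mathbb{Z}$ gives an integer interval. Since \textsc{ComputeDomainInterval} returns the smallest and largest feasible labels, every integer between them is also feasible, so $F_p = \mathcal{D}[p]$ exactly. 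In other words, $\mathcal{D}[p]$ contains no infeasible labels. I expect this to be the main point requiring care. The monotone-residual normalization is what makes each per-constraint condition one-sided in $l$, and I must also handle $F_p=\emptyset$, in which case $\mathcal{D}[p]$ is empty and $p$ has no children.

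For the forward direction ($\Rightarrow$), I simply discard the interval conjunct. The condition $l\in\mathcal{D}[p]=F_p$ already says that $(p,l)$ is feasible.

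For the reverse direction ($\Leftarrow$), suppose $(p,l)$ is feasible. Then $l\in F_p=\mathcal{D}[p]$. Consequently $p$ belongs to the set over which $\psi(l)$ and $\phi(l)$ are the minimum and maximum, so $\psi(l)\le \text{idx}(p)\le\phi(l)$. Both conjuncts hold. Finally, I will note where the IP assumption is used: the interval $[\psi(l),\phi(l)]$ may contain parents $p'$ with $l\notin\mathcal{D}[p']$, and exactness only holds because the second conjunct filters those out. So the statement is exact for the conjunction, not for the interval alone, and that is the point of the theorem.
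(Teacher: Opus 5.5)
Your proof is correct and is essentially the paper's: the paper likewise reduces the theorem to the lemma that, for IPs, $\mathcal{D}[p]$ is exactly the set of feasible labels at $p$ (by convexity of linear constraints), and your per-constraint half-line argument is an explicit version of that step. That step is where the IP assumption is actually used, rather than in the filtering by $l\in\mathcal{D}[p']$ that your closing remark points to; the paper's proof also adds that after splitting, the sub-intervals of nodes sharing label $l$ are non-overlapping, so the interval test identifies the unique target node, which your reading with the global $[\psi(l),\phi(l)]$ does not need.
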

\noindent The proof appears after Lemma~\ref{lem:ip_domain_exactness}.

\subsubsection{Width-$K$ Construction}
We require $\width \geq K$ since we start at width-$K$.
We iteratively construct the width-$K$ DD starting from the root node $r$. Given parent layer $L_{i-1}$ with $\leq K$ nodes, layer $L_i$ is constructed as follows:
\textbf{(1) Compute feasible domains:}
For each parent $p \in L_{i-1}$, compute $\mathcal{D}[p] = [\mathit{lb}_p, \mathit{ub}_p]$ via \textsc{ComputeDomainInterval}.
\textbf{(2) Invert intervals:}
For each label $l \in d(x_i)$, compute $[\psi(l), \phi(l)]$.
\textbf{(3) Create child nodes:}
For each label $l \in d(x_i)$ where $\psi(l)$ and $\phi(l)$ are defined, create a child node with label $l$ and inverted interval $[\psi(l), \phi(l)]$.
\textbf{(4) Update node states:}
For each node, compute its state via \textsc{ComputeState}, defined via \textsc{UpdateState}, \textsc{IsFeasible}, and \textsc{MergeStates}.
Algorithm \ref{algo:width_k_construction} formalizes this procedure, and Figure~\ref{fig:implicit_relaxed_dd} (left) shows it for \Pex.

\begin{figure}[h]
\centering
\includegraphics[width=\textwidth]{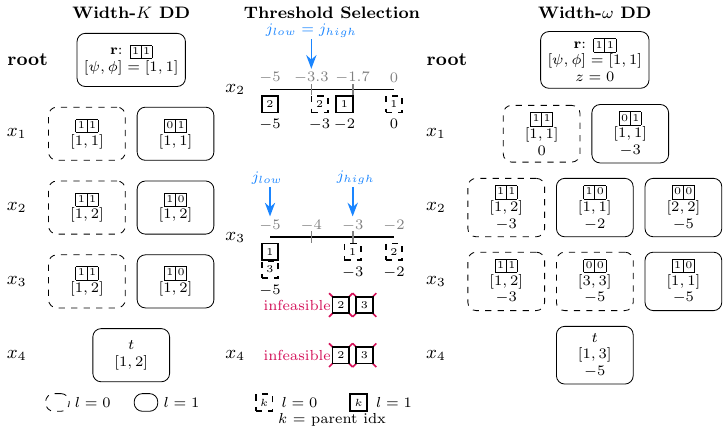}
\caption{Implicit relaxed DD construction for $\Pex$ with $\width=3$.
The width-$K$ DD (left) is constructed first.
Then, for each layer, we select thresholds (middle) and generate width-$\width$ children (right).
\textbf{Left:} Width-$K$ DD; each node shows state and inverted interval.
\textbf{Middle:} Arcs placed on a number line by objective value; thresholds $j_{low}$ and $j_{high}$ determine which arcs are individualized versus merged.
For $x_2$, there are four arcs at $z \in \{-5, -3, -2, 0\}$ with threshold $j_{low}=j_{high}=-3.3$: the $z=-5$ arc will be individualized, and the remaining arcs will be merged by label.
\textbf{Right:} Width-$\width$ DD; each node shows state, inverted interval, and objective value.
For $x_2$, the $l=0$ node with interval $[1,2]$ is the merged node (the interval indicates it came from parents 1 and 2).}
\label{fig:implicit_relaxed_dd}
\end{figure}

\begin{algorithm}[h!t]
\SetAlgoLined
    \textbf{Input:} The root node $r$ for a given \Pgen with initial state $s(r)$.\\
    $L_0 \leftarrow \{r\}$ \\
    $\relDD \leftarrow \{L_0\}$ \\
    \For{$i = 1$ to $n$}{
        \ForEach{parent node $p \in L_{i-1}$}{
            $\mathcal{D}[p] = [\mathit{lb}_p, \mathit{ub}_p] \leftarrow$ \textsc{ComputeDomainInterval}$(p)$\\
        }
        $L_i \leftarrow \emptyset$\\
        \ForEach{label $l \in d(x_i)$}{
            $\psi(l) \leftarrow \min \{\text{idx}(p) : l \in \mathcal{D}[p]\}$, $\;$ $\phi(l) \leftarrow \max \{\text{idx}(p) : l \in \mathcal{D}[p]\}$\\
            Create node $q$ with label $l$ and inverted interval $[\psi(l), \phi(l)]$: $L_i \leftarrow L_i \cup \{q\}$\\
        }
        \ForEach{node $q \in L_i$ with label $l$ and inverted interval $[\psi(l), \phi(l)]$}{
            $s(q) \leftarrow$ \textsc{ComputeState}$(L_{i-1}, [\psi(l), \phi(l)], l)$\\
        }
        $\relDD \leftarrow \relDD \cup\{L_i\}$
    }

    \Return{$\relDD$}
    \caption{Initial Width-$K$ Relaxed DD Construction}
    \label{algo:width_k_construction}
\end{algorithm}

\subsubsection{\textsc{IsArcFeasible}: General and IP}

Given parent $p$ and label $l$, determine if arc $(p,l)$ is feasible.
Equivalent to \textsc{IsFeasible}$(\textsc{UpdateState}(s(p), x_i, l))$.
The complexity is bounded by $\mathcal{O}(|s|)$ for state access, plus $C_{updt}$ and $C_{isf}$.
For IPs, once the intervals are computed, \textsc{IsArcFeasible} is $\mathcal{O}(1)$ by Theorem~\ref{thm:ip_exactness}.

\subsubsection{\textsc{ComputeDomainInterval}: General and IP}

Given parent $p$ on $L_{i-1}$, compute the interval $\mathcal{D}[p] = [\mathit{lb}_p, \mathit{ub}_p]$ of feasible values for $x_i$ from $p$.
This finds:
\[\mathit{lb}_p = \min\{l : \textsc{IsArcFeasible}(p, l)\} \text{ and } \mathit{ub}_p = \max\{l : \textsc{IsArcFeasible}(p, l)\}\]
In general, this requires iterating over the domain $\big(\mathcal{O}(K)\big)$, calling \textsc{IsArcFeasible} for each value.
Thus, the complexity is bounded by $\mathcal{O}(K \cdot |s|) + \mathcal{O}(K) \cdot (C_{updt} + C_{isf})$.
For IPs, each constraint derives a bound on $x_i$ from its residual; taking the tightest requires $\mathcal{O}(|s|) = \mathcal{O}(m)$.
\begin{lemma}\label{lem:ip_domain_exactness}
For IPs, $\mathcal{D}[p] = \{l \in \mathbb{Z} : \textsc{IsArcFeasible}(p, l)\}$.
\end{lemma}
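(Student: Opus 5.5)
We prove that, for IPs, the set of feasible labels from a parent $p$ is a contiguous integer interval. The bounds computed by \textsc{ComputeDomainInterval} are then exactly its endpoints. Fix a parent $p$ on layer $L_{i-1}$ with residual vector $\mathbf{r}(p)$, where $x_1,\dots,x_{i-1}$ are fixed. By Equation~\ref{eq:residual_update}, assigning $x_i = l$ produces child residuals
\[
r_j' = r_j(p) + a_{ji}\cdot(\mathit{lb}_i - l) \text{ if } a_{ji}>0, \qquad r_j' = r_j(p) + a_{ji}\cdot(\mathit{ub}_i - l) \text{ if } a_{ji}<0,
\]
and $r_j' = r_j(p)$ if $a_{ji}=0$. \textsc{IsArcFeasible}$(p,l)$ holds iff $l \in d(x_i)=[\mathit{lb}_i,\mathit{ub}_i]\cap\mathbb{Z}$ and $r_j' \ge 0$ for every $j$. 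We intersect the domain with the per-constraint conditions, so labels outside $d(x_i)$ are excluded on both sides.

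The key step is that each condition $r_j' \ge 0$ is a single linear inequality in $l$, so it defines a half-line.
\begin{itemize}
\item If $a_{ji}>0$, it is equivalent to $l \le \mathit{lb}_i + r_j(p)/a_{ji}$, an upper bound. Since $l$ is an integer, this is $l \le \mathit{lb}_i + \lfloor r_j(p)/a_{ji}\rfloor$.
\item If $a_{ji}<0$, dividing by the negative coefficient flips the inequality, giving the lower bound $l \ge \mathit{ub}_i - \lfloor r_j(p)/|a_{ji}| \rfloor$, after the analogous integer rounding.
\item If $a_{ji}=0$, the condition reduces to $r_j(p)\ge 0$. This holds for all $l$ or for none.
\end{itemize}
The feasible label set is therefore $[\mathit{lb}_i,\mathit{ub}_i]\cap\mathbb{Z}$ intersected with these half-lines (or empty). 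That is an intersection of convex sets in $\mathbb{R}$ restricted to $\mathbb{Z}$, so it is a contiguous integer interval $[L,U]\cap\mathbb{Z}$, possibly empty.

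It remains to identify this interval with $\mathcal{D}[p]$. By definition, $\mathit{lb}_p$ is the minimum and $\mathit{ub}_p$ the maximum label satisfying \textsc{IsArcFeasible}$(p,\cdot)$. These are exactly the tightest bounds computed above, which is the $\mathcal{O}(m)$ procedure described just before the lemma. Hence $\mathcal{D}[p] = [\mathit{lb}_p,\mathit{ub}_p]\cap\mathbb{Z}$ equals the feasible set:
\begin{itemize}
\item every feasible label lies between the min and the max;
\item by contiguity, every integer between them is feasible.
\end{itemize}
If no label is feasible, both sides are empty, with the convention that $\mathcal{D}[p]$ is empty.

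The only delicate point is the sign flip and rounding when $a_{ji}<0$ (floor versus ceiling), together with the degenerate $a_{ji}=0$ case. The whole proof rests on monotonicity of each $r_j'$ in $l$, not on exact formulas. So I would present the argument as "each $r_j'$ is affine, hence monotone, in $l$, so $\{l : r_j'\ge 0\}$ is an interval." That avoids the rounding bookkeeping entirely.
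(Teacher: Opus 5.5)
Your proof is correct and uses the same argument as the paper, which observes that linear constraints define a convex region, so every integer between feasible endpoints $\mathit{lb}_p$ and $\mathit{ub}_p$ is feasible. You make this explicit by showing that each updated residual is affine in $l$, so each condition $r_j' \ge 0$ is a half-line. You also handle the restriction to $d(x_i)$, the $a_{ji}=0$ case and the empty case, which the paper leaves implicit.
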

\begin{proof}
Linear constraints define a convex region, so if $\mathit{lb}_p$ and $\mathit{ub}_p$ are feasible (non-negative residuals), then every integer $l \in [\mathit{lb}_p, \mathit{ub}_p]$ is feasible. \qed
\end{proof} 

\begin{proof}[Theorem~\ref{thm:ip_exactness}]
By Lemma~\ref{lem:ip_domain_exactness}, $l \in \mathcal{D}[p]$ determines if arc $(p,l)$ exists;\\ $\text{idx}(p) \in [\psi(l), \phi(l)]$ determines which node it connects to.
Since intervals are non-overlapping, exactly one node with label $l$ contains each parent, so the checks together identify the unique arc. \qed
\end{proof}

\subsubsection{\textsc{ComputeState}: General and IP}

Given inverted interval $[\psi(l), \phi(l)]$, label $l$, and parent layer $L_{i-1}$, compute $s(q)$ by merging all feasible parent states.
For each $p$ with $\text{idx}(p) \in [\psi(l), \phi(l)]$, compute \textsc{UpdateState}$(s(p), x_i, l)$, check \textsc{IsFeasible}, and if feasible, merge via \textsc{MergeStates}.
Let $|I|$ denote the number of parents in the interval.
The complexity is $\mathcal{O}(|I| \cdot |s|) + \mathcal{O}(|I|) \cdot (C_{updt} + C_{isf} + C_{mrg})$.
For IPs, \textsc{IsArcFeasible} is $\mathcal{O}(1)$ by Theorem~\ref{thm:ip_exactness}, so we check feasibility first, calling \textsc{UpdateState} and \textsc{MergeStates} only for feasible arcs.
IP states are residuals updated via Equation~\ref{eq:residual_update}, and merged via element-wise maximum.

\subsubsection{Efficient Node Splitting}

The initial width-$K$ DD has one node per domain value, each storing inverted interval $[\psi(l), \phi(l)]$ spanning up to $K$ parents.
After splitting to width $\width$ via threshold-based splitting (below), parent layers may contain up to $\width$ nodes, so each child's inverted interval may span up to $\width$ parents.
However, our procedure ensures that when a node from the $K$-width layer splits, the resulting nodes partition the parent layer into \textbf{non-overlapping intervals}.
These intervals span at most $\width$ parents in total.
Thus, \textsc{ComputeState} requires $\mathcal{O}(K \cdot \width)$ parent accesses per layer, avoiding the $\mathcal{O}(\width^2)$ of standard relaxed DDs.

\subsubsection{Threshold Selection}

Given a threshold $\tau$, each arc with $f(p,l) < \tau$ becomes a node, while each contiguous sequence of arcs with $f(p,l) \geq \tau$ (consecutive parents, same label) shares a single node.
Using Algorithm~\ref{algo:histogram}, we bin arcs into $\mathcal{O}(\width)$ bins, then compute bounds on the layer size for each candidate threshold to select a $\tau$ yielding $\leq \width$ nodes.

For bin boundary $j$, consider one of the $K$ nodes from Algorithm~\ref{algo:width_k_construction}.
Let $n_{low}$ be the number of in-arcs to this node in bins $< j$ (arcs that are always individualized), and $n_{high}$ the number in bins $\geq j$ (arcs that may be grouped).
If $n_{high} = 0$, the node splits into exactly $n_{low}$ nodes (all arcs individualized); if $n_{low} = 0$, it doesn't split.
When both $n_{low} > 0$ and $n_{high} > 0$, the node count is bounded by $[n_{low} + 1, n_{low} + \min(n_{low} + 1, n_{high})]$, derived as follows. 
The lower bound assumes above-threshold arcs form one node; the upper bound treats individualized arcs as separators creating at most $n_{low} + 1$ contiguous runs of above-threshold arcs.
Summing over all $K$ nodes yields global layer size bounds for threshold $j$.
Computing these bounds is $\mathcal{O}(K)$ for each of the $\mathcal{O}(\width)$ candidate thresholds, totaling $\mathcal{O}(K \cdot \width)$ operations.
We select $j_{low}$ as the highest bin with upper bound $\leq \width$, and $j_{high}$ as the highest bin with lower bound $\leq \width$.
Arcs with $j < j_{low}$ are individualized; contiguous runs of arcs with $j \geq j_{low}$ share a single node.
Arcs with $j_{low} \leq j < j_{high}$ are individualized until none remain or $|L| = \width$.
Figure~\ref{fig:implicit_relaxed_dd} (middle) demonstrates this for \Pex.

\subsubsection{Layer Construction}
The two passes described above are formalized in Algorithm~\ref{algo:implicit_relaxed}; Figure~\ref{fig:implicit_relaxed_dd} (right) shows the result for \Pex.
Our implementation combines them into a single pass using a budget to track remaining capacity.
For each node, states are computed using \textsc{ComputeState}, then domain intervals are computed using \textsc{ComputeDomainInterval}.
The inverted intervals $[\psi(l), \phi(l)]$ are recomputed from these domain intervals for use in the next layer.
Since each of the $\mathcal{O}(\width)$ nodes merges $\mathcal{O}(K)$ parents on average, the cost of calling \textsc{ComputeState} $\width$ times is $\mathcal{O}(K \cdot \width \cdot |s|) + \mathcal{O}(K \cdot \width) \cdot (C_{updt} + C_{isf} + C_{mrg})$.

\begin{theorem}[Tightness]\label{thm:tightness}
The complexity is tight: $\Theta(K \cdot \width \cdot |s|)$ when \textsc{UpdateState} and \textsc{MergeStates} are black-box operations over $|s|$-component states.
\end{theorem}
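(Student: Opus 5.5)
Proving tightness requires two halves: the upper bound $\mathcal{O}(K \cdot \width \cdot |s|)$ per layer, and a matching lower bound $\Omega(K \cdot \width \cdot |s|)$ for any algorithm that treats \textsc{UpdateState} and \textsc{MergeStates} as black boxes.

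\textbf{Upper bound.} I would collect the per-layer costs of Algorithm~\ref{algo:implicit_relaxed}:
\begin{itemize}
\item histogram construction and threshold selection cost $\mathcal{O}(K \cdot \width)$ (Algorithm~\ref{algo:histogram});
\item \textsc{ComputeDomainInterval} over $\le \width$ parents costs $\mathcal{O}(K \cdot \width \cdot |s|)$ in general;
\item \textsc{ComputeState} costs $\mathcal{O}(K\cdot\width\cdot|s|)$ in total.
\end{itemize}
For the last item, the key fact is the one stated under Efficient Node Splitting. For each label $l$, the children with label $l$ have pairwise non-overlapping inverted intervals. So, summed over those children, the parent accesses number at most $\width$, and summed over the $K$ labels they number at most $K\cdot\width$. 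Each access costs $\mathcal{O}(|s|)$ of inherent work. Inverting the intervals for the next layer costs only $\mathcal{O}(K\cdot\width)$. This part is bookkeeping.

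\textbf{Lower bound.} I would use an adversary or information argument against a black-box model. In this model a state is an opaque vector of $|s|$ components. Each \textsc{UpdateState} or \textsc{MergeStates} call must read its input components and write its output components, costing $\Omega(|s|)$, as in Section~\ref{sec:strategy}. The outputs depend on all inputs in a way the algorithm cannot predict.

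I would build a family of layers in which the parent layer has $\width$ nodes, every parent has all $K$ labels feasible, and every arc $(p,l)$ is distinct. Consider any relaxed layer the algorithm outputs, with at most $\width$ nodes. Every feasible arc must be represented, since $\solP \subseteq Sol(\relDD)$ requires every feasible path to survive. Hence each of the $K\cdot\width$ arc states $\textsc{UpdateState}(s(p),x_i,l)$ must influence the state of the node it enters. Otherwise the adversary alters that arc's updated state, say by making it the unique component-wise extreme, and the output node's state is no longer a valid relaxation.

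Because the functions are black boxes, the only way for an arc state to influence a node state is through a \textsc{UpdateState} call on $(s(p),l)$. The algorithm cannot compose or reuse results across different labels without calling the function. So at least $K\cdot\width$ \textsc{UpdateState} evaluations occur, each costing $\Omega(|s|)$. Summing gives $\Omega(K\cdot\width\cdot|s|)$ per layer, which matches the upper bound.

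\textbf{Main obstacle.} The hard step is making the lower bound rigorous. I need to argue that no black-box algorithm can skip an arc: for instance, by computing one update and inferring others, or by merging parent states before updating. I would handle this by fixing the model explicitly. States are opaque, \textsc{UpdateState} is the only way to obtain a child contribution, and merging before updating is disallowed because the black-box update need not commute with merge.

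I would then give the adversary strategy. Whenever some pair $(p,l)$ with $p$ feasible is never queried, the adversary sets that update to a value that violates the returned merged state's relaxation property, for example by making one component larger than the merged residual. Every pair is therefore queried at least once. I would also note that $\width\ge K$ and the width-$\width$ parent layer are achievable, so the instance family is legitimate.
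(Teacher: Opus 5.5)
Your proposal is correct and follows essentially the same argument as the paper. The paper's proof gives only the lower-bound half: because \textsc{UpdateState} need not be a homomorphism with respect to \textsc{MergeStates} (e.g.\ $s \cdot v$ with $\max$ and $v<0$), parent states cannot be merged before updating, so each of the $\mathcal{O}(K \cdot w)$ arcs needs its own \textsc{UpdateState} call reading $\Omega(|s|)$ components; the matching upper bound comes from the non-overlapping-interval accounting you reproduce. Your adversary framing makes the ``every arc must be evaluated'' step more explicit than the paper does, but take the correctness criterion to be that each node's state equals the merge of all its incoming arc states, since a merely ``valid relaxation'' could be satisfied by a trivially loose state without any queries.
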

\begin{proof}
When \textsc{UpdateState} is not a homomorphism with respect to \textsc{MergeStates}, parent states cannot be merged before updating.
The algorithm must invoke \textsc{UpdateState} on each of the $\mathcal{O}(K \cdot \width)$ arcs.
Each invocation reads $\Omega(|s|)$ components since the output may depend on any input.
For example, $\textsc{UpdateState}(s, v) = s \cdot v$ with $\textsc{MergeStates}(s_1,s_2) = \max(s_1, s_2)$ yields different results depending on the order they are applied when $v < 0$. \qed
\end{proof}

\begin{algorithm}[h!b]
\SetAlgoLined
    $\relDD \leftarrow$ width-$K$ relaxed DD (Algorithm \ref{algo:width_k_construction})\\

    \For{$i = 1$ to $n$}{
        \tcp{Threshold selection}
        $(\tau_{min}, \tau_{max}, C) \leftarrow$ Algorithm \ref{algo:histogram}$(L_{i-1}, \width, K)$\\
        \ForEach{bin $j$, node $q \in L_i$}{
            $n_{low} \leftarrow C[j-1,l(q)]$; $n_{high} \leftarrow C[\width,l(q)] - n_{low}$
        }
        $l[j] \leftarrow \sum_q (n_{low} + \min(1, n_{high}))$; \quad $u[j] \leftarrow \sum_q (n_{low} + \min(n_{low}+1, n_{high}))$\\
        $j_{low} \leftarrow \max\{j : u[j] \leq \width\}$; \quad $j_{high} \leftarrow \max\{j : l[j] \leq \width\}$\\

        \tcp{Pass 1: individualize safe arcs}
        $L_i \leftarrow \emptyset$\\
        \ForEach{label $l \in d(x_i)$ in sorted order}{
            $\psi(l) \leftarrow \min \{\text{idx}(q) : l \in \mathcal{D}[q]\}$; $\phi(l) \leftarrow \max \{\text{idx}(q) : l \in \mathcal{D}[q]\}$\\
            $\text{start} \leftarrow \emptyset$\\
            \ForEach{node $p \in L_{i-1}$ with $\text{idx}(p) \in [\psi(l), \phi(l)]$}{
                $j \leftarrow$ bin of $f(p,l)$\\
                \uIf{$j < j_{low}$}{
                    \If{$\text{start} \neq \emptyset$}{
                        \tcp{a node is defined by its parents and label}
                        add node$([\text{start}, idx(p)-1], l)$; $\text{start} \leftarrow \emptyset$
                    }
                    add node$(idx(p),l)$
                }
                \lElseIf{$\text{start} = \emptyset$}{$\text{start} \leftarrow idx(p)$}
            }
            \lIf{$\text{start} \neq \emptyset$}{add node$([\text{start}, \phi(l)], l)$}
        }

        \tcp{Pass 2: fill remaining width}
        \ForEach{node $q \in L_i$}{
            \ForEach{arc $(p, l(q))$ with $j_{low} \leq j < j_{high}$}{
                \lIf{$|L_i| \geq \width -1$}{\textbf{stop}}
                split $q$ to individualize arc $(p, l(q))$
            }
        }

        \ForEach{node $q \in L_i$}{
            $s(q) \leftarrow$ \textsc{ComputeState}$(L_{i-1}, \text{arcs}(q), l(q))$\\
            $\mathcal{D}[q] = [\mathit{lb}_q, \mathit{ub}_q] \leftarrow$ \textsc{ComputeDomainInterval}$(q)$
        }
        \textbf{(Optional)} Apply additional operations (e.g., rough bounding \cite{ddo})\\
        $\relDD \leftarrow \relDD \cup \{L_i\}$
    }

    \Return{$\relDD$}
    \caption{Implicit Relaxed Decision Diagram Refinement}
    \label{algo:implicit_relaxed}
\end{algorithm}

\subsubsection{Generalized Complexity Analysis for Implicit Relaxed DDs}
We analyze Algorithm \ref{algo:implicit_relaxed}.
The cost of Algorithm \ref{algo:width_k_construction} is subsumed by Algorithm \ref{algo:implicit_relaxed}, as Phase 1 is subsumed by Phase 2 in Algorithm \ref{algo:separation_relaxed}.
Algorithm  \ref{algo:implicit_relaxed} performs three operations per layer: (1) threshold selection, (2) threshold-based splitting, and (3) other.

\textbf{Threshold selection:}
Algorithm~\ref{algo:histogram} runs in $\mathcal{O}(K \cdot \width)$ time.
Computing bounds $l[j]$ and $u[j]$ for each bin boundary requires $\mathcal{O}(K \cdot \width)$ operations.
Finding $j_{low}$ and $j_{high}$ via linear scan is $\mathcal{O}(\width)$.
Per layer: \[
    \textcolor{safeA1}{\mathcal{O}(K \cdot \width)}
\]

\textbf{Layer construction via threshold-based splitting:}
Node splitting requires two passes over $\mathcal{O}(K \cdot \width)$ arcs with $\mathcal{O}(1)$ work per arc, creating at most $\width$ nodes.
By Theorem~\ref{thm:tightness}, calling \textsc{ComputeState} $\width$ times costs $\mathcal{O}(K \cdot \width \cdot |s|) + \mathcal{O}(K \cdot \width) \cdot (C_{updt} + C_{isf} + C_{mrg})$.
Calling \textsc{ComputeDomainInterval} $\width$ times costs $\mathcal{O}(K \cdot \width \cdot |s|) + \mathcal{O}(K \cdot \width) \cdot (C_{updt} + C_{isf})$.
Inverting domain intervals to compute $[\psi(l), \phi(l)]$ for each label is $\mathcal{O}(K \cdot \width)$.
Per layer: \[
    \textcolor{safeA1}{\mathcal{O}(K \cdot \width \cdot |s|)} + \mathcal{O}(K \cdot \width) \cdot (C_{updt} + C_{isf} + C_{mrg})
\]

\textbf{Additional operations:} Optional operations with cost $C_{othr}$ (excluded).

The inherent framework cost is the \textcolor{safeA1}{component} that excludes problem-specific costs.
For $n$ layers:
\begin{align*}
\textbf{Separation:} \quad & \textcolor{safeA1}{\mathcal{O}\Big(n \cdot K \cdot \width^2 \cdot |s|\Big)} + \mathcal{O}(n \cdot \width) \cdot (C_{selspl} + C_{part}) \\
& + \mathcal{O}(n \cdot K \cdot \width) \cdot C_{isf} + \mathcal{O}(n \cdot K \cdot \width^2) \cdot (C_{updt} + C_{mrg}) \\[6pt]
\textbf{Top-down:} \quad & \textcolor{safeA1}{\mathcal{O}\Big(n \cdot K \cdot \width \cdot (K \cdot \width + |s|)\Big)} \\
& + \mathcal{O}(n \cdot K \cdot \width) \cdot (C_{updt} + C_{isf} + C_{mrg} + C_{selmrg}) \\[6pt]
\textbf{Implicit:} \quad & \textcolor{safeA1}{\mathcal{O}\Big(n \cdot K \cdot \width \cdot |s|\Big)} + \mathcal{O}(n \cdot K \cdot \width) \cdot (C_{updt} + C_{isf} + C_{mrg})
\end{align*}

Implicit relaxed DDs (Algorithm \ref{algo:implicit_relaxed}) outperform relaxed DDs by separation (Algorithm \ref{algo:separation_relaxed}).
Removing common factors and simplifying:
\[
\textcolor{safeA1}{\mathcal{O}\Big(\width\Big)} + C_{selspl} + C_{part} + \mathcal{O}(\width) \cdot (C_{updt} + C_{mrg}) \gg \textcolor{safeA1}{\mathcal{O}\Big(1\Big)}
\]

Similarly, Algorithm \ref{algo:implicit_relaxed} outperforms top-down relaxed DDs (Appendix \ref{appendix:topdown_relaxed}, Algorithm \ref{algo:topdown_relaxed}).
Removing common factors and simplifying:
\[
\textcolor{safeA1}{\mathcal{O}\Big(K \cdot \width\Big)} + C_{selmrg} \gg \textcolor{safeA1}{\mathcal{O}\Big(1\Big)}
\]

The key improvement is the elimination of the $\width^2$ factor.
Separation incurs $\mathcal{O}(\width^2)$ pointer updates during node splitting; top-down incurs $\mathcal{O}(K^2 \cdot \width^2)$ arc redirections during merging.
Implicit relaxed DDs avoid these quadratic costs by representing arcs as intervals rather than explicit pointers.

\begin{corollary}[Optimality]
Under the assumptions of Theorem~\ref{thm:tightness}, the inherent cost of Algorithm~\ref{algo:implicit_relaxed} is asymptotically optimal.
\end{corollary}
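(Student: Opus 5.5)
The plan is to match an upper bound with a lower bound. For the upper bound I would reuse the per-layer analysis already carried out for Algorithm~\ref{algo:implicit_relaxed}, which shows an inherent cost of $\mathcal{O}(K \cdot \width \cdot |s|)$ per layer. For the lower bound I would invoke Theorem~\ref{thm:tightness}, which says that any framework treating \textsc{UpdateState} and \textsc{MergeStates} as black boxes must spend $\Omega(K \cdot \width \cdot |s|)$ per layer. Since the two bounds agree, the inherent cost is $\Theta(K\cdot\width\cdot|s|)$ per layer, and hence $\Theta(n \cdot K \cdot \width \cdot |s|)$ over all $n$ layers. No algorithm in this model can therefore be asymptotically faster, which is the claimed optimality.

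The upper bound is bookkeeping over the three per-layer operations.
\begin{itemize}
\item Threshold selection costs $\mathcal{O}(K\cdot\width)$: Algorithm~\ref{algo:histogram} plus computing the bounds $l[j]$ and $u[j]$ for each bin.
\item The two splitting passes cost $\mathcal{O}(K\cdot\width)$ with $\mathcal{O}(1)$ work per arc, and inverting the intervals also costs $\mathcal{O}(K\cdot\width)$.
\item The calls to \textsc{ComputeState} and \textsc{ComputeDomainInterval} cost $\mathcal{O}(K\cdot\width\cdot|s|)$.
\end{itemize}
For the last item I would rely on the non-overlapping-interval property from the efficient node splitting discussion. Within each of the $K$ width-$K$ nodes, the split nodes partition that node's parent interval. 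The total number of parent accesses is therefore at most $K\cdot\width$, and each access costs $\mathcal{O}(|s|)$. All other terms are dominated by this one whenever $|s|\ge 1$.

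For the lower bound I would make the adversary argument behind Theorem~\ref{thm:tightness} explicit. Choose an instance in which all $\Theta(K\cdot\width)$ candidate arcs between consecutive full-width layers are feasible, and in which \textsc{UpdateState} does not commute with \textsc{MergeStates}; the example $s\cdot v$ with $\max$ and $v<0$ serves. Any correct algorithm must then evaluate \textsc{UpdateState} on every arc, because pre-merging parents could change the resulting state. Each such evaluation must read all $|s|$ components, because the black-box output may depend on any of them; an adversary can flip an unread component and change the answer. This gives $\Omega(K\cdot\width\cdot|s|)$ per layer.

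I expect the main obstacle to be the claim that there are $\Theta(K\cdot\width)$ arcs, and not fewer. The algorithm must be shown to face that many arcs in the worst case: the parent layer has $\width$ nodes, each with $K$ feasible labels. The framework must also be prevented from legitimately skipping arcs through some cleverness that is not captured by the black-box assumption. I would handle this by fixing the model precisely: the algorithm is required to produce the states of a relaxed DD whose arcs cover every feasible transition, and its only access to states is through the black boxes. Under that model, every feasible arc's update is necessary for correctness, so the corollary follows.
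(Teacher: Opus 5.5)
Your proposal is correct and matches the paper's proof, which combines the $\mathcal{O}(K\cdot\width\cdot|s|)$ per-layer upper bound from the analysis of Algorithm~\ref{algo:implicit_relaxed} with the black-box lower bound of Theorem~\ref{thm:tightness}. Your additions only spell out what the paper leaves implicit: the non-overlapping-interval partition that gives at most $K\cdot\width$ parent accesses, and the precise computational model behind the lower bound.
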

\begin{proof}
The framework achieves $\mathcal{O}(K \cdot \width \cdot |s|)$ per layer.
By Theorem~\ref{thm:tightness}, this is unavoidable when update and merge are black-box operations. \qed
\end{proof}

\begin{theorem}[Feasibility Lower Bound]\label{thm:feasibility}
$\mathcal{O}(\width) \cdot C_{isf}$ feasibility checks per layer are unavoidable when \textsc{IsFeasible} is a black-box predicate and the algorithm must distinguish feasible from infeasible nodes.
\end{theorem}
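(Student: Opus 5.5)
The plan is an adversary (indistinguishability) argument in the style of the proof of Theorem~\ref{thm:tightness}. We treat \textsc{IsFeasible} as an oracle whose answer on a state $s(v)$ depends only on $s(v)$. Nothing about it can be inferred from calls on other states. The algorithm must output a layer containing exactly the feasible nodes among its candidates. The first step fixes the setting. In any layer where the width limit binds, the restricted construction (Algorithm~\ref{algo:implicit_restricted}) or the relaxed refinement (Algorithm~\ref{algo:implicit_relaxed}) produces $\Theta(\width)$ candidate nodes with states $s(v_1),\dots,s(v_\width)$. We need at least this many candidates so the bound is not vacuous. For the restricted case this holds whenever at least $\width$ arcs exist. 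For the relaxed case it holds once the layer is split to width $\width$.

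The second step is the core. Suppose some algorithm decides the feasibility status of every candidate while calling \textsc{IsFeasible} on fewer than $\width$ of them. Then some candidate $v_k$ is never queried. The adversary builds two oracles, $\textsc{IsFeasible}_1$ and $\textsc{IsFeasible}_2$. They agree on every state except $s(v_k)$: it is feasible under the first and infeasible under the second. Since the oracle is a black box, every query the algorithm makes, and therefore its entire execution trace, is identical under the two oracles. The output, in particular whether $v_k$ is kept, is therefore the same in both cases. That output is wrong for one of them: it keeps an infeasible node, breaking $Sol(\resDD)\subseteq\solP$ or exactness of pruning, or it discards a feasible one. Hence every candidate must be queried, giving at least $\width$ calls and $\Omega(\width)\cdot C_{isf}$ cost per layer. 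Summed over $n$ layers, this matches the $\mathcal{O}(\width)\cdot C_{isf}$ term in the implicit analyses, so that term is tight.

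The main obstacle is making the adversary legitimate. The two oracles must be consistent with the rest of the problem-specific interface, and the candidate states must be distinct. If two candidates shared an identical state, one query would settle both, and the count could drop. I would handle this in two ways. First, I would choose an instance in which \textsc{UpdateState} produces pairwise distinct states on the $\width$ selected arcs, for example by embedding the arc identity in one state component, as the $s\cdot v$ construction in Theorem~\ref{thm:tightness} does. Second, I would note that changing the oracle's value on a single state does not affect \textsc{UpdateState} or \textsc{MergeStates}. Any queries on merged or intermediate states can be answered consistently, since only $s(v_k)$ is perturbed and it appears nowhere else. I would also remark that this does not contradict the $\mathcal{O}(1)$ \textsc{IsArcFeasible} for IPs in Theorem~\ref{thm:ip_exactness}. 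That speedup relies on opening the black box: linearity and the convexity argument of Lemma~\ref{lem:ip_domain_exactness} are exactly the structure the theorem's hypothesis excludes.
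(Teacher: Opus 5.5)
Your proposal is correct and uses the same reasoning as the paper's proof: a black-box \textsc{IsFeasible} reveals nothing about a state it is never called on, so each of the $\width$ output nodes needs its own call. Your two-oracle adversary, together with the requirement that the $\width$ candidate states be pairwise distinct (a case the paper's one-line argument does not address), simply makes that reasoning rigorous. One small correction to your closing remark: only the implicit restricted analysis matches the bound, because the implicit relaxed construction checks feasibility per arc at cost $\mathcal{O}(K \cdot \width) \cdot C_{isf}$, which is exactly the $\mathcal{O}(K)$ gap the paper records in its Problem-Specific Gap corollary.
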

\begin{proof}
Without invoking \textsc{IsFeasible}, the algorithm cannot determine whether a node's state is feasible.
A relaxed DD that cannot distinguish feasible from infeasible nodes can compute bounds, but cannot identify feasible solutions or generate meaningful subproblems for branch-and-bound.
To distinguish feasibility for $\width$ output nodes, at least $\width$ invocations are required. \qed
\end{proof}

\begin{corollary}[Problem-Specific Gap]
The gap between Algorithm~\ref{algo:implicit_relaxed} and a theoretically perfect algorithm is a factor $\mathcal{O}(K)$  $\textsc{IsFeasible}$ calls per layer.
\end{corollary}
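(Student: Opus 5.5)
The plan is to compare, term by term, the problem-specific costs of Algorithm~\ref{algo:implicit_relaxed} with the lower bounds already established. First, I will take the per-layer cost of Algorithm~\ref{algo:implicit_relaxed} from the generalized complexity analysis above:
\[
\mathcal{O}(K \cdot \width \cdot |s|) + \mathcal{O}(K \cdot \width) \cdot (C_{updt} + C_{isf} + C_{mrg}).
\]
I will then split it into three parts: the inherent term, the update/merge term, and the feasibility term.

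For the inherent term $\mathcal{O}(K\cdot\width\cdot|s|)$, I will invoke Theorem~\ref{thm:tightness} and the Optimality corollary. Together they say this term is unavoidable when \textsc{UpdateState} and \textsc{MergeStates} are black boxes, so a theoretically perfect algorithm also pays it and it contributes no gap.

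For the update/merge term $\mathcal{O}(K\cdot\width)\cdot(C_{updt}+C_{mrg})$, I will reuse the argument in the proof of Theorem~\ref{thm:tightness}. When update is not a homomorphism with respect to merge, each of the $\Theta(K\cdot\width)$ arcs must have \textsc{UpdateState} applied individually, and the updated states must be merged. A perfect algorithm therefore also makes $\Omega(K\cdot\width)$ calls of each, which closes this term.

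The remaining term is $\mathcal{O}(K\cdot\width)\cdot C_{isf}$. Theorem~\ref{thm:feasibility} guarantees only $\Omega(\width)\cdot C_{isf}$ as a lower bound. Dividing gives a ratio of $\mathcal{O}(K)$, so the algorithm makes at most an $\mathcal{O}(K)$ factor more \textsc{IsFeasible} calls per layer than the lower bound.

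The main obstacle is making the word ``gap'' precise. The statement should be read as a comparison of upper bounds against the proven lower bounds, not as a claim that $K$ extra calls are always necessary. I will also check where the $K\cdot\width$ feasibility calls actually arise, namely in \textsc{ComputeState} and \textsc{ComputeDomainInterval}, which iterate over $\mathcal{O}(K)$ parents or labels per node. This confirms that the upper bound is attained and that the only unmatched factor is $K$ on the $C_{isf}$ term. Finally, I will note that for IPs \textsc{IsArcFeasible} is $\mathcal{O}(1)$ by Theorem~\ref{thm:ip_exactness}, so there the gap is absorbed by the inherent cost. That remark is not needed for the general claim.
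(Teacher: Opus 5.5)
Your proposal is correct and follows essentially the same route as the paper. The paper's proof likewise cites Theorem~\ref{thm:tightness} to show that $\mathcal{O}(K\cdot\width)\cdot(C_{updt}+C_{mrg})$ is unavoidable and Theorem~\ref{thm:feasibility} for the $\mathcal{O}(\width)\cdot C_{isf}$ lower bound, then compares these against the achieved $\mathcal{O}(K\cdot\width)\cdot(C_{updt}+C_{isf}+C_{mrg})$ to read off the $\mathcal{O}(K)$ factor. Your extra remarks, disposing of the inherent term via the Optimality corollary and reading ``gap'' as achieved upper bound versus proven lower bound, are consistent with that three-line argument but not needed for it.
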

\begin{proof}
By Theorem~\ref{thm:tightness}, $\mathcal{O}(K \cdot \width) \cdot (C_{updt} + C_{mrg})$ is unavoidable.
By Theorem~\ref{thm:feasibility}, $\mathcal{O}(\width) \cdot C_{isf}$ is unavoidable.
The framework achieves $\mathcal{O}(K \cdot \width) \cdot (C_{updt} + C_{isf} + C_{mrg})$.
Thus, the gap is a factor $\mathcal{O}(K)$. \qed
\end{proof}
This gap reflects checking feasibility at arcs rather than nodes; we hypothesize that the resulting improvement in bound quality makes this tradeoff worthwhile.

\subsubsection{Application to Integer Programs}
For \Pint, the state representation and operations remain identical to Section \ref{sec:relaxed_dd}.
As before $|s| = m$, $C_{updt} = C_{isf} = C_{mrg} = \mathcal{O}(m)$, and all problem-specific operations are subsumed by inherent cost.
The overall compelxity is: 
\[
\mathcal{O}\Big(n \cdot K \cdot \width \cdot m\Big)
\]

\section{Mixed Integer Programming}
\label{sec:mip}

DDs can drive a MIP solver by providing combinatorial relaxations over integers while treating continuous variables separately.

\subsection{MIP Formulation}

Consider a MIP with $n_I$ integer variables and $n_C$ continuous variables.
Partition the constraint matrix $A = [A_I \mid A_C]$ and objective $c = [c_I; c_C]$ accordingly.
Let $\mathit{lb}_C, \mathit{ub}_C$ be the continuous variable bounds.
The initial residuals (Equation~\ref{eq:initial_residuals}) now include continuous contributions.
Define the \emph{continuous contribution} $\delta_i^C$ as:
\begin{equation}
\label{eq:cont_contribution}
\delta_i^C = \sum_{j=1}^{n_C} a_{ij}^C \cdot
\begin{cases}
\mathit{lb}_j^C & \text{if } a_{ij}^C > 0\\
\mathit{ub}_j^C & \text{if } a_{ij}^C < 0
\end{cases}
\end{equation}
Then $r_i^{(0)} = b_i - \delta_i^I - \delta_i^C$, where $\delta_i^I$ is the integer contribution (Equation~\ref{eq:initial_residuals}).

\subsection{DD-Driven MIP Solving}

Build the DD over integers only, propagating residuals via Equation~\ref{eq:residual_update}.
Terminal residuals encode slack after integers are fixed, but with $\boldsymbol{\delta}^C$ still subtracted.

\paragraph{Restricted DDs.}
Normally the best solution that reaches the terminal layer of a restricted DD is the best solution found by that restricted DD.
However for MIP, the continuous variables have to be accounted for.
Instead of merging the nodes on the terminal layer into one node, they have to be hadled separately.
Each terminal node represents a complete integer assignment with residual vector $\mathbf{r}$.
To obtain the true RHS for the continuous LP, add back $\boldsymbol{\delta}^C$:
\begin{equation}
\label{eq:restricted_lp}
\min \; c_C^\top x_C \quad \text{s.t.} \quad A_C x_C \leq \mathbf{r} + \boldsymbol{\delta}^C, \quad \mathit{lb}_C \leq x_C \leq \mathit{ub}_C
\end{equation}
If the LP for a particular terminal node is feasible, the integer assignment plus LP solution yields a feasible MIP solution; otherwise, the integer assignment is infeasible.
A restricted DD can only be marked exact if every terminal node's LP is checked. 

\paragraph{Relaxed DDs.}
Merge all terminal nodes into one, combining residuals via element-wise maximum.
Let $\mathbf{r}^{(1)}, \ldots, \mathbf{r}^{(k)}$ be the residuals before merging, and let $r^{mrg}_i = \max_j r^{(j)}_i$.
Then $\mathbf{r}^{mrg} \geq \mathbf{r}^{(j)}$ component-wise for all $j$.
Solve the LP from Equation \eqref{eq:restricted_lp} with $\mathbf{r} = \mathbf{r}^{mrg}$ to obtain $z^*_{LP}$.
Since larger RHS weakens the constraints, $z^*_{LP}(\mathbf{r}^{mrg}) \leq z^*_{LP}(\mathbf{r}^{(j)})$ for all $j$.
Thus the shortest root-to-terminal path in the relaxed DD, plus $z^*_{LP}$, is a valid lower (relaxed) bound on the MIP.

\paragraph{Convergence.}
Restricted and relaxed DDs yield upper and lower bounds, respectively.
A node is \emph{exact} if all paths to it yield the same state; an \emph{exact cutset} is a set of exact nodes intersecting every root-to-terminal path.
DD-based branch-and-bound \cite{BnB,BnB2} prunes nodes whose lower bound exceeds the best upper bound, and branches on exact cutsets.
For MIP, bounds are two-part (integer contribution plus LP contribution), but convergence follows from the same arguments.

\section{Experimental Results}
\label{sec:experiments}

Our goal is to lay the groundwork for competitive DD-based solvers by ensuring efficient DD construction.
State-of-the-art solvers leverage countless techniques; we focus purely on what DDs achieve alone.
We use naive DD-based branch-and-bound \cite{BnB,BnB2} without initial presolve; the only presolve at each node is Feasibility-Based Bound Tightening \cite{achterberg2018}.
This minimal setup isolates DD construction cost, directly validating our theoretical analysis.
To minimize overhead, our implementation preallocates and reuses DD memory.
Experiments ran on AMD EPYC 9655 nodes (2.7 GHz) with up to 300 GB RAM on a single-thread. 

\subsection{Width Scaling}
\label{sec:width_scaling}

To validate linear scaling in \width, we use random binary knapsack instances with $K = 2$ and $|s| = m = 1$, isolating the \width\ dependence and enabling scaling to $\width = 10^8$ within memory limits.
Instances are intentionally hard ($n = 100$, capacity at 50\% of expected weight); with a 10-minute time limit, none are solved, maximizing the number of full-width layer constructions. 

Figure~\ref{fig:width_scaling} shows per-layer construction time, averaged across all layers and 10 instances, for widths from $10^3$ to $10^8$.
All subroutines scale linearly with \width, confirming $\mathcal{O}(\width)$ per-layer complexity.
Linear regression yields per-layer runtimes of $21.7 \pm 0.6$ ms per $10^6$ $\width$ for restricted DDs ($R^2 = 0.988$) and $34.7 \pm 0.6$ ms per $10^6$ $\width$ for relaxed DDs ($R^2 = 0.994$).
At $\width = 10^8$, measured per-layer times are 2.2 s (restricted) and 3.6 s (relaxed).
Memory scales at $2.58$ GB per $10^6$ $\width$ ($R^2 > 0.999$); optimality gap improves with larger widths (Appendix~\ref{appendix:width_scaling_figures}).
Memory, not time, was the bottleneck for scaling these experiments further. 

\begin{figure}[h!t]
\centering
\begin{subfigure}[b]{0.49\textwidth}
\centering
\includegraphics[width=\textwidth]{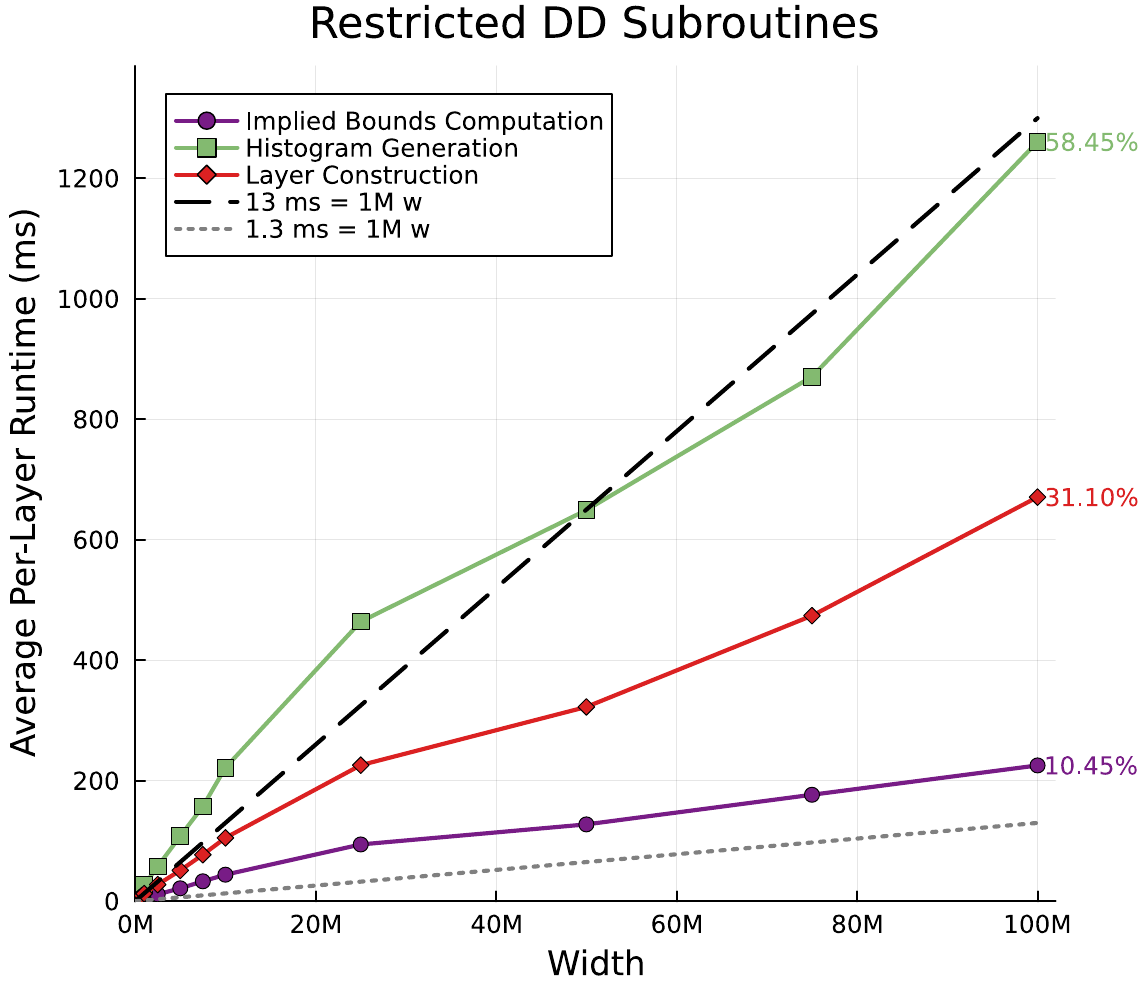}
\caption{Restricted DD subroutines}
\label{fig:restricted_runtime}
\end{subfigure}
\hfill
\begin{subfigure}[b]{0.49\textwidth}
\centering
\includegraphics[width=\textwidth]{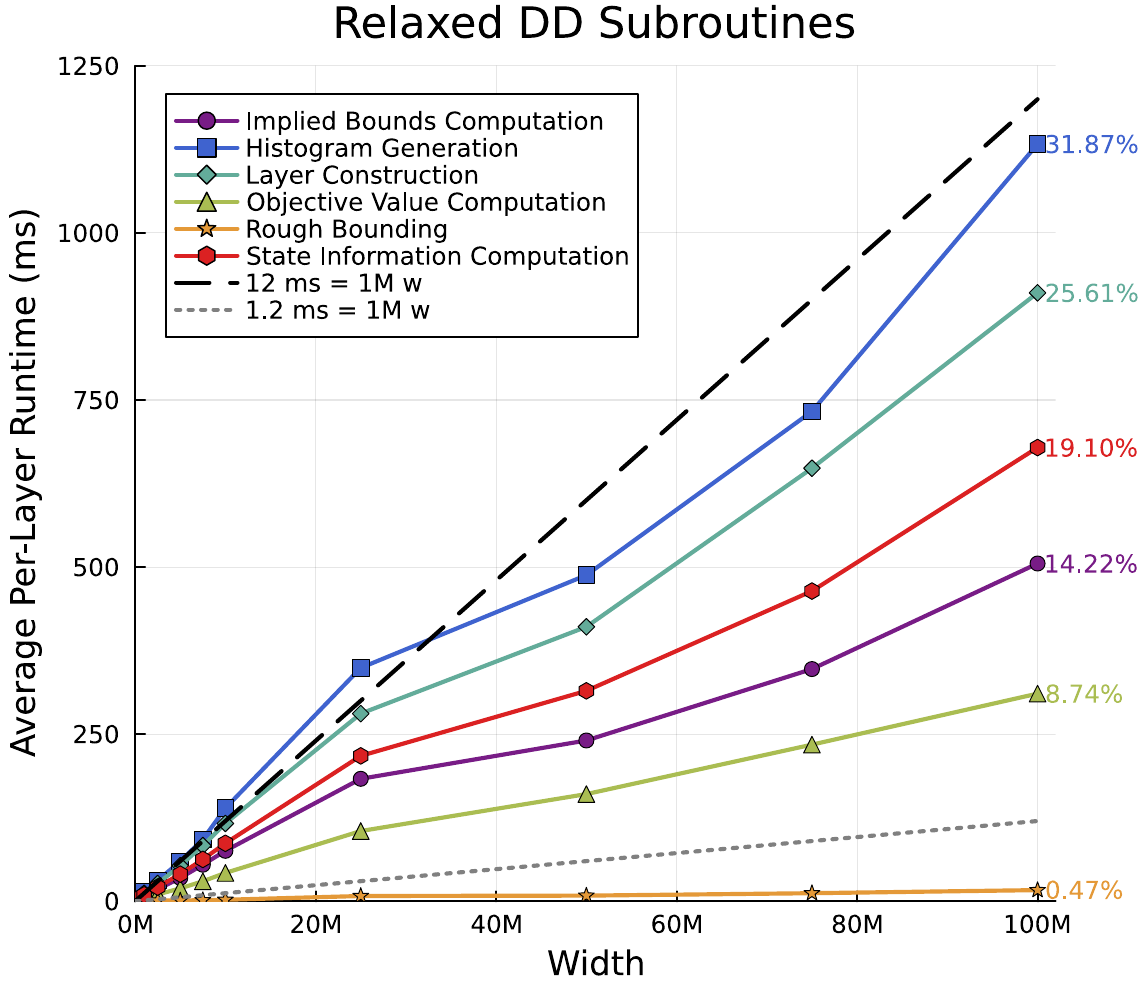}
\caption{Relaxed DD subroutines}
\label{fig:relaxed_runtime}
\end{subfigure}
\caption{Per-layer runtime vs.\ width on hard binary knapsack instances.}
\label{fig:width_scaling}
\end{figure}

\subsection{Domain and State Scaling}
\label{sec:ks_scaling}

To validate the $K$ (domain size) and $|s|$ (state size) components, we use multi-constraint generalized knapsack instances with $\width = 10^4$, $n = 100$, and scaling factor $f$ from 1 to 1000 ($K = 2f$, $|s| = f$).

Figure~\ref{fig:ks_scaling} shows per-layer construction time, averaged across all layers and 10 instances per configuration.
For restricted DDs, all subroutines scale linearly with $f$, confirming $\mathcal{O}(K + |s|)$ dependence; linear regression yields $3.87 \pm 0.07$ ms per unit $f$ ($R^2 = 0.989$).
For relaxed DDs, all subroutines except state computation scale linearly, fitting $0.31$ ms per unit $f$; state computation is $\mathcal{O}(K \cdot |s|)$ (Theorem~\ref{thm:tightness}), fitting $0.0025$ ms per $f^2$ (combined $R^2 = 0.986$).
State computation dominates at large $f$, consistent with $\mathcal{O}(K \cdot \width \cdot |s|)$ total complexity.
Memory remains modest (${\sim}1$ GB at $f = 1000$) because allocation depends on $\width \cdot |s|$, not $K$, and our implementation double-buffers state information ($2 \cdot \width \cdot |s|$ storage rather than $n \cdot \width \cdot |s|$).
Memory scales at $0.98 \pm 0.02$ MB per unit $f$ ($R^2 = 0.981$).

\begin{figure}[h!t]
\centering
\begin{subfigure}[b]{0.49\textwidth}
\centering
\includegraphics[width=\textwidth]{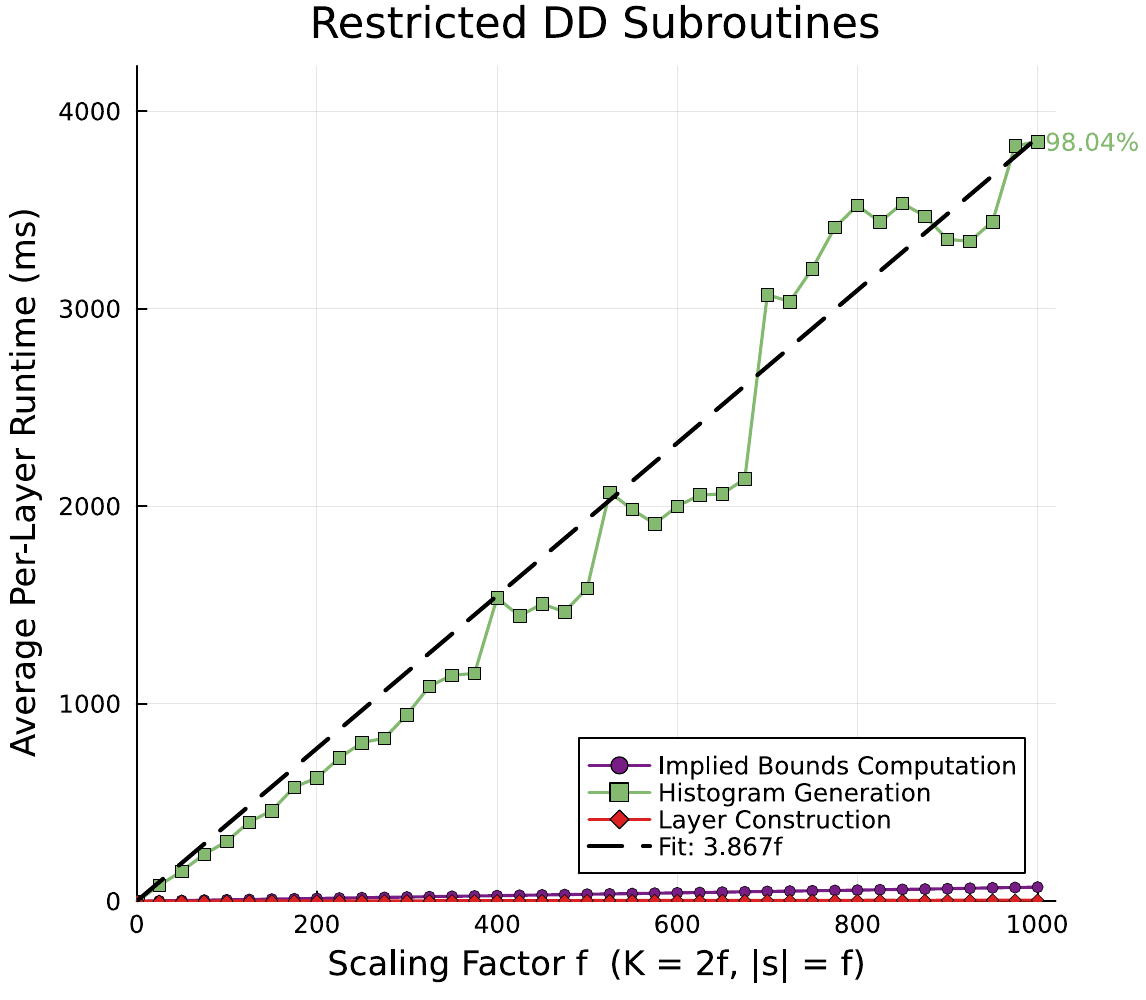}
\caption{Restricted DD subroutines}
\label{fig:ks_restricted_runtime}
\end{subfigure}
\hfill
\begin{subfigure}[b]{0.49\textwidth}
\centering
\includegraphics[width=\textwidth]{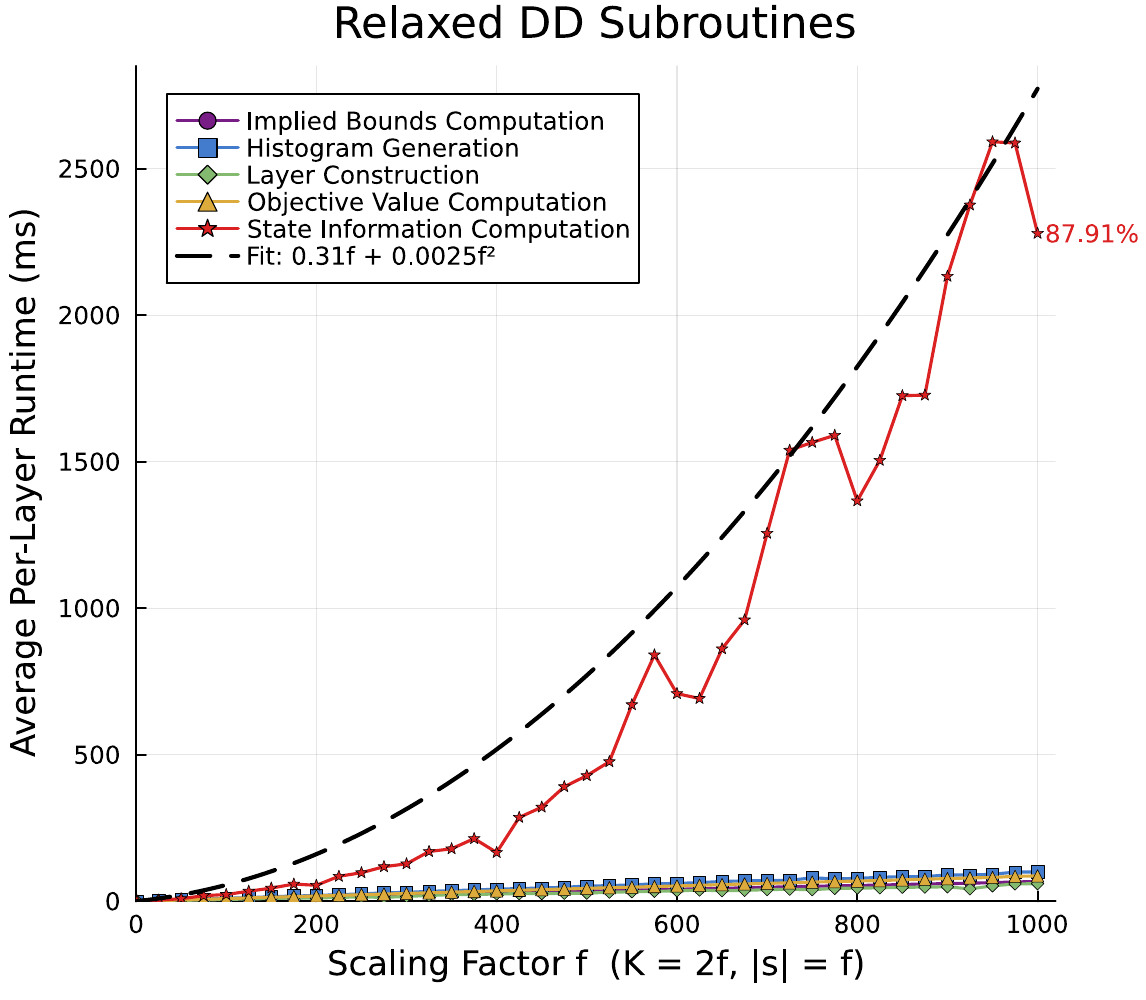}
\caption{Relaxed DD subroutines}
\label{fig:ks_relaxed_runtime}
\end{subfigure}
\caption{Per-layer runtime vs.\ scaling factor $f$ ($K = 2f$, $|s| = f$) at fixed $\width = 10^4$.}
\label{fig:ks_scaling}
\end{figure}

\subsection{Solver Comparison}
\label{sec:solver_comparison}

We now compare our solver against established MIP solvers on Subset Sum, one of Karp's original NP-Complete problems \cite{Karp1972}.
Each instance has $n$ binary variables, $m = 1$ equality constraint with coefficients sampled uniformly from $[1, 2^b]$, and a hidden feasible solution guaranteeing satisfiability.
ImplicitDDs uses $\width = 10^5$ and all solvers receive a 15-minute time limit.

We test $n \in \{20, 25, 30, 35, 40\}$ and $b \in \{16, 20, 24, 28, 32\}$ bits per coefficient, with 40 random seeds per configuration (1000 instances total).
Each instance was given to ImplicitDDs (default), ImplicitDDs (reordered) with variables sorted by coefficient magnitude, Gurobi, HiGHS, and SCIP.
Figure~\ref{fig:solver_comparison} shows performance profiles.
SCIP times out on 40\% of instances; HiGHS solves all but one.
The other 3 solvers all finish within 1 minute.
Gurobi had a geometric mean of 4.99s and a max of 55.2s.
ImplicitDDs (default) had a geometric mean of 2.4s, a max of 8.4s, and a median 2.2x speedup over Gurobi.
ImplicitDDs (reordered) had a geometric mean of 0.12s, a max of 1.6s, and a median 45.5x speedup over Gurobi.

ImplicitDDs is a research prototype rather than a production solver, and omits queue management heuristics that would be necessary for general-purpose benchmarking.
Subset Sum is a suitable testbed because it yields hard problems with few variables.
Queues remain small and construction speed is the dominant factor, making the benchmark a direct test of our core contribution.
\begin{figure}[h!t]
\centering
\includegraphics[width=0.7\textwidth]{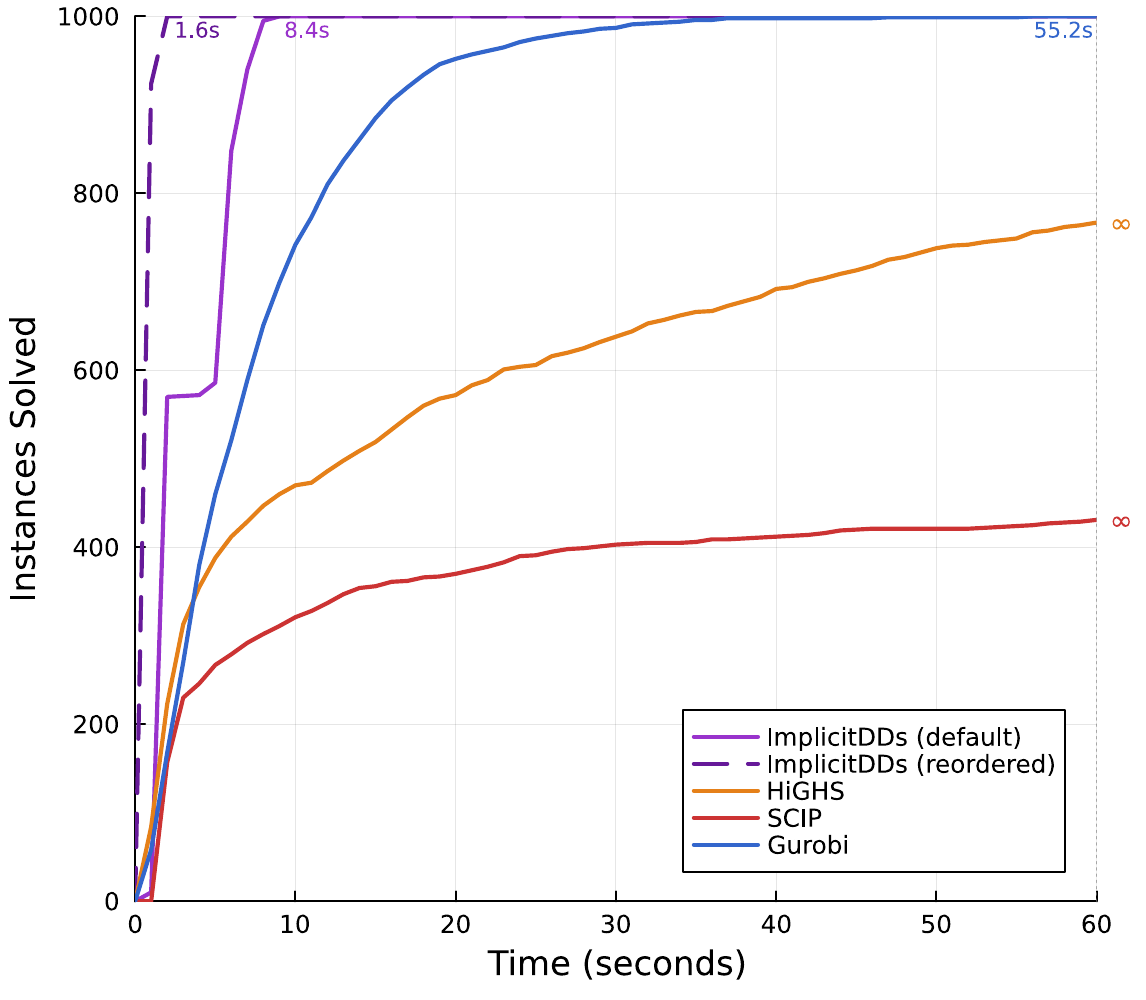}
\caption{Performance profile on 1000 Subset Sum instances.}
\label{fig:solver_comparison}
\end{figure}

\section{Conclusion}
\label{sec:conclusion}

Standard Decision Diagrams have a fundamental bottleneck: $\mathcal{O}(\width \log \width)$ for restricted DDs, $\mathcal{O}(\width^2)$ for relaxed.
Implicit DDs eliminate those bottlenecks by reducing both to $\mathcal{O}(\width)$, the theoretical limit.
On Subset Sum, our implementation yields a 45.5x median speedup over Gurobi.

Yet construction is merely one component of a DD-based solver.
MIP technology has matured through decades of refinement in presolve, branching, cutting planes, decomposition, and LP solving; DD-based optimization is only beginning this journey.
A problem-specific variable ordering contributed 20x here; the DD literature offers general alternatives we did not explore. 
Many MIP staples lack DD-specific counterparts but offer promising research directions. 
Among them: presolve, decomposition, and custom LP solvers.
The gap between a fast subroutine and a broadly competitive solver is wide; this work begins to close it.

\paragraph{Acknowledgments.} This work was funded by the Canada Research Chair on Healthcare Analytics and Logistics (CRC-2021-00556).

%
%
%
\bibliographystyle{splncs04}
\bibliography{bibliography}

\FloatBarrier
\appendix

\section{Relaxed Decision Diagram by Top Down Merging}
\label{appendix:relaxed_dd_methods}

We analyze top-down merging and compare to separation (Section \ref{sec:relaxed_dd}).

\subsection{Top-Down Construction via Merging}
\label{appendix:topdown_relaxed}

This differs from restricted DD construction (Algorithm \ref{algo:topdown_restricted}) only in the method of width control: nodes are \emph{merged} rather than removed.
One new operation: \textsc{SelectNodesToMerge}$(Q)$ selects two nodes.
Algorithm \ref{algo:topdown_relaxed} formalizes this.

\begin{algorithm}[h!t]
\SetAlgoLined
    \textbf{Input:} The root node $r$ for a given \Pgen. Problem-specific functions: $\textsc{UpdateState}$, $\textsc{IsFeasible}$, $\textsc{SelectNodesToMerge}$, and $\textsc{MergeStates}$. \\
    Initialize $Q \leftarrow \{r\}$, $Q_{next} \leftarrow \emptyset$\\

    \While{$Q \neq \emptyset$}{
        $Q_{next} \leftarrow \emptyset$\\
        \tcp{1. Child generation}
        \ForEach{node $u \in Q$}{
            \ForEach{label $l \in d(u)$}{
                Create a new node $v$: $Q_{next} \leftarrow Q_{next} \cup \{v\}$\\
                Add arc $a_{uv}$, labeled $l$, from $u$ to $v$; $s(v) = \textup{\textsc{UpdateState}}(s(u), x_{\ell(u)+1}, l)$\\
            }
        }
        \tcp{2. Feasibility pruning}
        \ForEach{node $u \in Q_{next}$}{
            \If{\textup{\textsc{IsFeasible}}($s(u)$) $=$ false}{
                $Q_{next} \leftarrow Q_{next} \backslash \{u\}$\\
            }
        }
        \tcp{3. Width limiting via merging}
        \While{$|Q_{next}| > \width$}{
            $(u, v) \leftarrow$ \textsc{SelectNodesToMerge}$(Q_{next})$\\
            Create a new node $\beta$: $Q_{next} \leftarrow Q_{next} \backslash \{u,v\} \cup \{\beta\}$\\
            Redirect in-arcs of $u$ and $v$ to $\beta$\\
            $s(\beta) \leftarrow$ \textsc{MergeStates}$(u, v)$\\
        }
        \tcp{4. Additional operations}
        \textbf{(Optional)} Apply additional operations (e.g., rough bounding \cite{ddo})\\

        $Q \leftarrow Q_{next}$\\ 
    }

    \Return{\relDD}
    \caption{Top-Down Relaxed Decision Diagrams \cite{DDforO}}
    \label{algo:topdown_relaxed}
\end{algorithm}

\subsubsection{Complexity Analysis}

We extract the inherent costs to compare with Algorithm \ref{algo:separation_relaxed}.
The algorithm performs three operations per layer: (1) child generation, (2) feasibility pruning, and (3) width limiting via merging.

\textbf{Child generation:} Identical to Algorithm \ref{algo:topdown_restricted}.
Per layer: \[
\textcolor{safeA1}{\mathcal{O}(K \cdot \width \cdot |s|)} + \mathcal{O}(K \cdot \width) \cdot C_{updt}
\]

\textbf{Feasibility pruning:} Identical to Algorithm \ref{algo:topdown_restricted}.
Per layer: \[
\textcolor{safeA1}{\mathcal{O}(K \cdot \width \cdot |s|)} + \mathcal{O}(K \cdot \width) \cdot C_{isf}
\]

\textbf{Width limiting via merging:}
Reduces $|Q_{next}|$ from up to $K \cdot \width$ nodes to $\width$ nodes via repeated pairwise merging $\big(\mathcal{O}(K \cdot \width)$ merges$\big)$.
Each merge:
(1) Calls $\textsc{SelectNodesToMerge}$ ($C_{selmrg}$).
(2) Redirects in-arcs from nodes $u$ and $v$ to merged node $\beta$.
In the worst case (growing chain), successive merges handle 2, 3, ..., up to $K \cdot \width - \width + 1$ arcs, $\mathcal{O}(K \cdot \width)$ on average.
(3) Calls $\textsc{MergeStates}$, $\big(\mathcal{O}(|s|) + C_{mrg}\big)$.
Per layer: \[
\textcolor{safeA1}{\mathcal{O}(K^2 \cdot \width^2 + K \cdot \width \cdot |s|)} + \mathcal{O}(K \cdot \width) \cdot (C_{selmrg} + C_{mrg})
\]

The $\mathcal{O}(K^2 \cdot \width^2)$ factor arises from $\mathcal{O}(K \cdot \width)$ merges, each redirecting $\mathcal{O}(K \cdot \width)$ in-arcs on average. For $n$ layers:
\[
\textcolor{safeA1}{\mathcal{O}(n \cdot (K^2 \cdot \width^2 + K \cdot \width \cdot |s|))} + \mathcal{O}(n \cdot K \cdot \width) \cdot (C_{selmrg} + C_{mrg} + C_{updt} + C_{isf})
\]

\subsubsection{Application to Integer Programs}
For \Pint, $|s| = m$.
$C_{updt} = \mathcal{O}(m)$ (computing residuals via Equation \ref{eq:residual_update});
$C_{mrg} = \mathcal{O}(m)$ (element-wise maximum of residuals);
$C_{isf} = \mathcal{O}(m)$ (checking residuals);
$C_{selmrg}$: selecting from $K \cdot \width$ candidates contributes $\mathcal{O}(K \cdot \width)$ per layer.
So the overall complexity becomes:
\[
\textcolor{safeA1}{\mathcal{O}(n \cdot (K^2 \cdot \width^2 + K \cdot \width \cdot m))} + \mathcal{O}(n \cdot K^2 \cdot \width^2) + \mathcal{O}(n \cdot K \cdot \width \cdot m)
\]
All problem-specific terms are subsumed by inherent costs.

\subsection{Comparison}
Comparing inherent costs, top-down (left) vs. separation (right):
\begin{align*}
\textcolor{safeA1}{\mathcal{O}\Big(n \cdot (K^2 \cdot \width^2 + K \cdot \width \cdot |s|)\Big)} &\quad \text{?} \quad \textcolor{safeA1}{\mathcal{O}\Big(n \cdot K \cdot \width^2 \cdot |s|\Big)} \\
\textcolor{safeA1}{\mathcal{O}\Big((n \cdot K \cdot \width) \cdot (K \cdot \width + |s|)\Big)} &\quad \text{?} \quad \textcolor{safeA1}{\mathcal{O}\Big((n \cdot K \cdot \width) \cdot (\width \cdot |s|)\Big)} \\
\textcolor{safeA1}{\mathcal{O}\Big(K \cdot \width + |s|\Big)} &\quad \text{?} \quad \textcolor{safeA1}{\mathcal{O}\Big(\width \cdot |s|\Big)}
\end{align*}

Top-down has an extra $K$ factor; separation has an extra $|s|$ factor.
Top-down is faster when $K < |s|$, and vice versa.
Since DD solvers favor smaller $K$, we expect $K < |s|$ for competitive instances, slightly favoring top-down.

\section{Additional Width Scaling Figures}
\label{appendix:width_scaling_figures}

Figure~\ref{fig:memory} shows preallocated DD memory scaling linearly with $\width$.
Figure~\ref{fig:gap_over_time} shows optimality gap evolution over time for various DD widths.

\begin{figure}[h!t]
\centering
\begin{subfigure}[b]{0.49\textwidth}
\centering
\includegraphics[width=\textwidth]{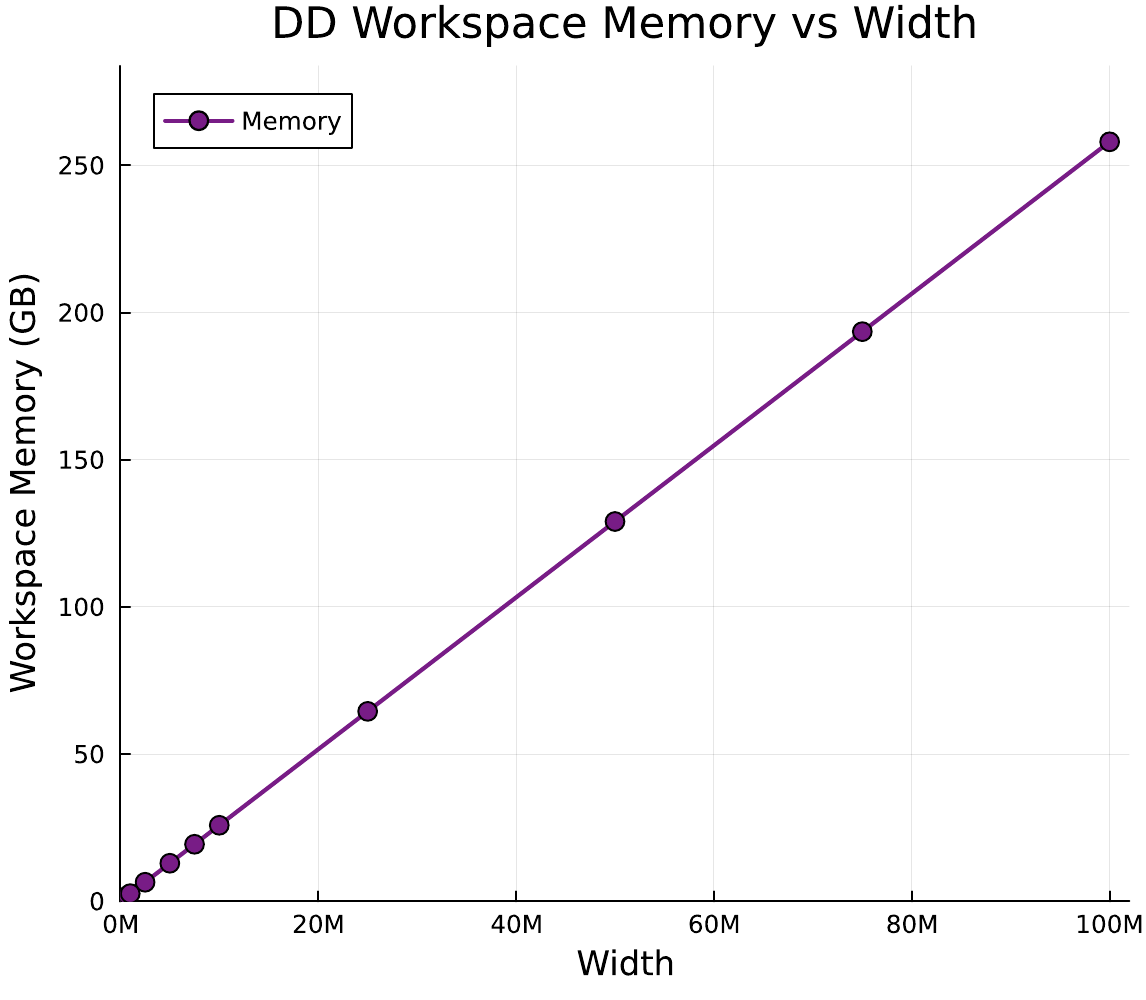}
\caption{Memory vs.\ width}
\label{fig:memory}
\end{subfigure}
\hfill
\begin{subfigure}[b]{0.49\textwidth}
\centering
\includegraphics[width=\textwidth]{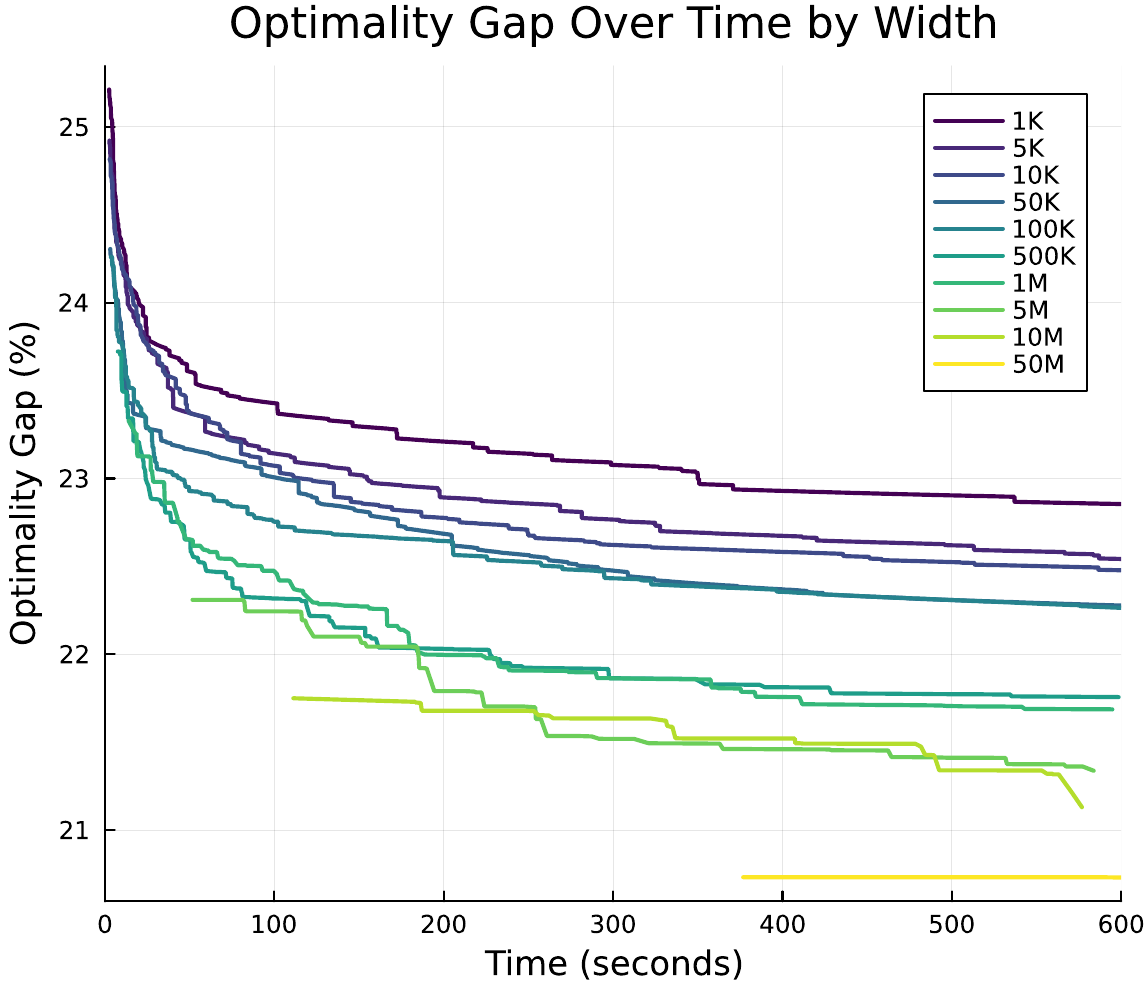}
\caption{Optimality gap over time}
\label{fig:gap_over_time}
\end{subfigure}
\caption{Additional width scaling results on hard knapsack instances.}
\label{fig:appendix_width_scaling}
\end{figure}

\end{document}